 \newcommand{\R}{{\mathbb R}}
\newcommand{\T}{{\mathbb T}}                   
\newcommand{\C}{{\mathbb C}}                  
\newcommand{\A}{{\mathbb A}}
\def\al{\alpha}
\def\om{\omega}
\def\Om{\Omega}
\def\ga{\gamma}
\def\si{\sigma}
\def\la{\lambda}
\newtheorem{theorem}{Theorem}[section]
\newtheorem{corollary}[theorem]{Corollary}
\newtheorem{proposition}[theorem]{Proposition}
\theoremstyle{definition}
\newtheorem{definition}[theorem]{Definition}
\theoremstyle{remark}
\newtheorem{remark}[theorem]{Remark}
\theoremstyle{example}
\def\calL{{\mathcal{L}}}
\def\calP{{\mathcal{P}}}
\def\calR{{\mathcal{R}}}
\def\calL{{\mathcal{L}}}
\def\calP{{\mathcal{P}}}
\def\R{\mathbb R}
\def\C{\mathbb C}
\def\T{\mathbb T}
\title[Admissibility of observation operators]{On the admissibility of observation operators in the context of maximal regularity}
\author{O. El Mennaoui}
\address{Department of Mathematics, Faculty of Sciences Hay Dakhla, BP8106, 80000--Agadir, Morocco}
\email{o.elmennaoui@uiz.ac.ma}
\author{S. Hadd}
\address{Department of Mathematics, Faculty of Sciences Hay Dakhla, BP8106, 80000--Agadir, Morocco}
\email{s.hadd@uiz.ac.ma}
\author{Y. Kharou}
\address{Department of Mathematics, Faculty of Sciences Hay Dakhla, BP8106, 80000--Agadir, Morocco}
\email{yassine.kharou@edu.uiz.ac.ma}
\subjclass{93C20,93C73,93C25}
\keywords{Non-autonomous systems, Evolution families, Admissible operators, $L^p$-Maximal regularity, Perturbation}
\begin{document}

\maketitle

\begin{abstract}
  We study admissible observation operators for perturbed evolution equations using the concept of maximal regularity. We first show the invariance of the maximal $L^p$-regularity under non-autonomous Miyadera-Voigt perturbations. Second, we establish the invariance of admissibility of observation operators under such a class of perturbations. Finally, we illustrate our result with two examples, one on a non-autonomous parabolic system, and the other on an evolution equation with mixed boundary conditions and a non-local perturbation.
\end{abstract}


\section{Introduction}\label{sec:1}


In this work, we propose a study of admissibility of observation operators for non-autonomous linear systems within the framework of the maximal $L^p$-regularity. This problem is studied in \cite[Section 2]{S02} (see also \cite{H06}, \cite{HP94}, \cite{JDP95}) in an abstract way using properties of evolution families. We also note that the admissibility for non-autonomous systems is less understood compared with the autonomous case, see e.g., \cite{JP04}, \cite{L03}, \cite{Sala87},\cite{Staf05},\cite{TW09},\cite{W89}.

Before summarizing precisely our results, we first recall some definitions. Denote by $(X,\|\cdot\|)$, $(D,\|\cdot\|_D)$ and $(Y,\|\cdot\|_Y)$ three Banach spaces such that $D$ is continuously and densely embedded into $X$ ($ D \hookrightarrow_{d} X $), and let $A:D\to X$ be the generator of a strongly continuous semigroup $\T:=(\T(t))_{t\ge 0}$ on $X$. An operator $C\in\calL(D,Y)$ is called an $L^\theta$-admissible observation operator for $A$ (or $(C,A)$ is $L^\theta$-admissible) and we write $C\in \mathscr{O}^\theta_Y(A)$ if for a real number $\theta > 1$, and for some (hence all) $\al>0,$ there exists a constant $\ga>0$ such that
\begin{align}\label{CT-estim}
\left(\int^\al_0 \|C\T(t)x\|^\theta_Ydt\right)^{\frac{1}{\theta}}\le \ga \|x\|,\qquad (x\in D).
\end{align}
This kind of operators appears naturally if one observes the semigroup solution at the boundary of a domain $\Om\subset \R^n$ (when $X$ is a space of functions defined on $\Om$), see \cite[Chapter 4]{TW09}. We note that the admissibility in the autonomous case is a problem of integrability at $0$ of the function $t\mapsto C\T(t)x$.

In \cite[Definition 2.4]{S02}, Schnaubelt extended the $L^\theta$-admissibility from semigroups to abstract evolution families $(U(t,s))_{t\ge s\ge 0}$ on $X$, where extra assumptions are added to justify the expression $CU(t,s)x$. Recently, the work \cite{Kharou} studied the well-posedness of the following observed evolution system
\begin{align}\label{nCP}
\begin{cases}
\dot{u}(t)+A(t)u(t)=0,\quad u(s)=x,& t\in [s,\tau],\cr y(t)=Cu(t),& t\in [s,\tau],
\end{cases}
\end{align}
for $\tau > 0$ fixed, where $(A(t))_{t\in [0,\tau]}$ is a family of unbounded operators with constant domain $D$ such that $A(\cdot):[0,\tau]\to\calL(D,X)$ is a bounded strongly measurable application, $C:D\to Y$ is a linear unbounded observation operator and $s \in [0,\tau)$. The main assumptions used in \cite{Kharou} was for each $t\in [0,\tau]$, $A(t)$ has the property of maximal regularity (we write $A(t)\in \mathscr{MR}$, see Definition \ref{MR-singel}) and $A(\cdot):[0,\tau]\to\calL(D,X)$ satisfies the relative $\nu$-Dini assumption for some $\nu\in (1,\infty)$ (see the condition {\bf(H1)} in Section \ref{sec:2}). According to Theorem \ref{Kharou-thm1}, the evolution equation \eqref{nCP} is solved by a unique bounded evolution family $U:=(U(t,s))_{0\le s\le t\le \tau}$ on $X$ such that $U(t,s)x \in D$ for all $x \in X$ and for almost every $t \in [s,\tau)$.  In this case, the $L^\theta$-admissibility of the non-autonomous system \eqref{nCP} (or $(C,A(\cdot))$ is exactlty defined like \eqref{CT-estim}, see Definition \ref{Admissibility-non-auto}.



In the setting of the maximal regularity, the third named author (see Theorem \ref{Kharou-thm2} below) proved that under the condition {\bf(H1)}, $(C,A(\cdot))$ is admissible if and only if $(C,A(t))$ is admissible for each $t\in [0,\tau)$ (in the sense of \eqref{CT-estim}, as each $A(t)$ generates an analytic semigroup on $X$). This means that instead of checking the admissibility condition for the evolution family $ U $, it suffices to check this condition individually for each $ A (t) $ with respect to their associated semigroups.


In the autonomous case, it was proved in \cite{HI06} that $C$ is admissible for $A$ if and only if $C$ is admissible for $A+P$ for any admissible perturbation $P\in \mathscr{O}^\theta_X(A)$, i.e., $\mathscr{O}^\theta_Y(A)=\mathscr{O}^\theta_Y(A+P)$ for any $P \in \mathscr{O}^\theta_X(A)$. The main objective of the present work is to extend this result to non-autonomous systems. To this end, consider a  strongly measurable and bounded function $P(\cdot):[0,\tau]\to\calL(D,X)$ satisfying the condition {\bf (H2)} (see Section \ref{sec:3}). In addition, we denote by $A^P(\cdot)$ the function $A^P(\cdot):[0,\tau]\to \calL(D,X)$ given by
\begin{align}\label{perturbed-operator}
A^{P}(t):=A(t)+P(t),\quad t\in [0,\tau].
\end{align}
In Theorem \ref{maximal regularity of A^P} (Section \ref{sec:3}), we show that the conditions {\bf (H1)} and {\bf (H2)} imply  $A^{P}(\cdot) \in \mathscr{MR}_p(0,\tau)$. We mention that the same result is obtained in \cite{ABDH20}, using a quite different proof mainly based on Lebesgue extensions of the operators $P(t)$. Now the fact that $A^{P}(\cdot) \in \mathscr{MR}_p(0,\tau)$ shows that $A^P(\cdot)$ is associated with an evolution family $V:=(V(t,s))_{0\le s\le t\le \tau}$ on the trace space, due to \cite[Proposition 2.3]{ACFP07}. In Proposition \ref{Big-U} we prove that the evolution family $V$ has a unique extension to an evolution family on the whole space $X$.


Under the above conditions, we prove in Theorem \ref{main1} that $(C,A(\cdot))$ is admissible if and only if $(C,A^P(\cdot))$ is admissible. This is somehow a non-autonomous version of a result in \cite{HI06}.

The rest of the paper is organized as follows. Section \ref{sec:2} gathers notation and required
material from the concept of maximal $L^p$-regularity. Section \ref{sec:21} discuss the invariance of  maximal $L^p$-regularity under non-autonomous Miyadera-Voigt perturbations. In Section \ref{sec:3}, we prove results on admissibility of observation operators for perturbed evolution equations. Section \ref{sec:4} is devoted to examples which illustrate the main results.

\section{Preliminaries on the  maximal $L^p$-regularity}\label{sec:2}
Throughout this section, $X$ and $D$ are Banach spaces with $ D \hookrightarrow_{d} X $. Moreover, we take real numbers $p\in (1,\infty)$ and $\tau,\tau'>0$ such that $\tau'<\tau$.

We recall some background about the concept of  maximal $L^p$-regularity for  non-autonomous linear systems.
\begin{definition}\label{evolutions-families}
A family $U:=(U(t,s))_{0\le s\le t\le \tau}\subset\calL(X)$ is an evolution family on $X$ if
\begin{itemize}
  \item [{\rm (i)}] $U(t,s)=U(t,r)U(r,s),\quad U(s,s)=I$,\quad for any $s,t,r\in [0,\tau]$ with $0\le s\le r\le t\le \tau$,
  \item [{\rm (ii)}] $\left\{(t,s) \in [0,\tau]^2 : t \geq s \right\} \ni (t,s)\mapsto U(t,s)$ is strongly continuous.
\end{itemize}

\end{definition}
\noindent The concept of evolution families is used to solve some classes of evolution
equations (see, e.g., \cite[Chapter 2]{CL99}, \cite[Section VI.9]{EN00} and \cite{P83}).
That is, the evolution family arises as the solution operator of the well-posed non-autonomous
evolution equation \begin{align*}\dot{u}(t)+ A(t)u(t) = 0,\; u(s) = x\in X,\quad 0\le s\le t\le \tau,\end{align*} where
$A(t)$ is (in general) an unbounded linear operator for every fixed $t$. We mention
that, in general, the function $U(t,s)x$, as a function of $t$, is not differentiable. However,
the differentiability of such a function is guaranteed if $A(t),\;t\in [0,\tau],$ satisfy additional conditions (see e.g. \cite{AT87} and \cite{S01}).

In the following, we will introduce a situation in which one can naturally associate an evolution family to a family of unbounded operators. We first define the notion of maximal regularity for single operators.
\begin{definition}\label{MR-singel}
   We say that an operator $A \in \mathcal{L}(D,X)$ has $L^p$-maximal regularity ($p \in (1,\infty)$) and we write $A \in \mathscr{MR}_{p}$ if for all bounded interval $(a,b)\subset \R$ ($a<b$) and all $f \in L^{p}(a,b;X)$, there exists a unique $u \in W^{1,p}(a,b;X) \cap L^{p}(a,b;D)$ such that
\begin{equation}
\dot{u}(t) + A u(t) = f(t) \quad t\text{-a.e. on } [a,b], \qquad u(a) = 0.
\end{equation}
\end{definition}
According to \cite{D00},  the property of maximal $L^p$-regularity is independent of the bounded interval $(a,b)$, and if $A \in \mathscr{MR}_{p}$ for some $p \in (1,\infty)$ then $A \in \mathscr{MR}_{q}$ for all $q \in (1,\infty)$ . Hence, we can write $A \in \mathscr{MR}$ for short. Also it is well known that if $A\in \mathscr{MR},$ then $-A$ generates an analytic semigroup on $X$ \cite{KW04}. The converse is true if we work in Hilbert spaces \cite[Corollary 1.7]{KW04}. We also refer to \cite{DHP03,KW04} for the concept of $\calR$-Boundedness and Fourier Multipliers applied to maximal regularity.

Let $p \in (1,\infty)$ and define the following functional space
\begin{align*}
{\rm MR}_{p}(a,b) := W^{1,p}(a,b;X) \cap L^{p}(a,b;D),\end{align*}
which we call the space of maximal regularity. It is equipped with the following norm
\begin{align*} \| u \|_{\mathrm{MR}_{p}(a,b)} :=  \| u \|_{W^{1,p}(a,b;X)} + \| u \|_{L^{p}(a,b;D)} \qquad \left( u \in {\rm MR}_{p}(a,b) \right),\end{align*} or, with the equivalent norm $$ \left( \| u \|_{W^{1,p}(a,b;X)}^p + \| u \|_{L^{p}(a,b;D)}^p \right)^{1/p}.$$ The space ${\rm MR}_{p}(a,b)$ is a Banach space when equipped with one of the above norms. Moreover, we consider the \emph{trace space} defined by $$ {\rm Tr_{p}} := \left \{ u(a) : u \in {\rm MR}_{p}(a,b) \right \},$$ and endowed with the norm $$\| x \|_{\rm Tr_p} := \inf \left\{ \| u \|_{{\rm MR}_{p}(a,b)} : x = u(a) \right\}.$$ The space ${\rm Tr_p}$ is isomorphic to the real interpolation space $\left(X,D\right)_{1-1/p,p}$ \cite[Chapter 1]{L95}. In particular, ${\rm Tr_p}$ does not depend on the choice of the interval $(a,b)$ and $ D \hookrightarrow_{d} {\rm Tr_p} \hookrightarrow_{d} X $. We also note that $$ {\rm MR}_{p}(a,b) \hookrightarrow_{d} C \left( [a,b];{\rm Tr_p} \right) ,$$ and the constant of the embedding does not depend on the interval $(a,b)$ \cite[chap. 3]{A95}. In fact, for each $u\in {\rm MR}_{p}(a,b)$ and $t\in [a,b],$ we select $w(r)=u(t-r)$ for a.e. $r\in [0,t]$. As $w(0)=u(t)$ and $w\in {\rm MR}_{p}(0,t),$ then $u(t)\in {\rm Tr_p}$. On the other hand, by the definition of the trace space, we have
\begin{align*}
\|u(t)\|_{\rm Tr_p}\le \|w\|_{{\rm MR}_{p}(0,t)}\le \|u\|_{{\rm MR}_{p}(a,b)}.
\end{align*}

Consider now the  non-autonomous evolution equation
\begin{equation}\label{homogeneous initial value problem}
\dot{u}(t) + A(t) u(t) = 0 \quad t \text{ a.e. on } [s,\tau], \qquad u(s) = x
\end{equation}
for any $s \in [0,\tau)$, $x \in X$ and $A(\cdot) : [0,\tau] \to \mathcal{L}(D,X)$ is a strongly measurable and bounded operator-valued function such that $A(t) \in \mathscr{MR}$ for all $t \in [0,\tau]$.

\begin{definition}\label{Def-nice}
Let $p \in (1,\infty)$. We say that $A(\cdot)$ has $L^p$-maximal regularity on the bounded interval $[0,\tau]$ (and we write $A(\cdot) \in \mathscr{MR}_{p}(0,\tau)$), if and only if for all $[a,b]$ a sub-interval of $[0,\tau]$ and all $f \in L^{p}(a,b;X)$, there exists a unique $u \in \mathrm{MR}_{p}(a,b)$ such that
\begin{equation}
\dot{u}(t) + A(t) u(t) = f(t) \quad t\text{ a.e. on } [a,b], \qquad u(a) = 0.
\end{equation}

\end{definition}
\noindent Note that, by compactness of $[0,\tau]$, $A(\cdot) \in \mathscr{MR}_{p}(0,\tau)$ if and only if there exists $\alpha_0 > 0$ such that for all $[a,b]$ a sub-interval of $[0,\tau]$ with $|b-a| < \alpha_0$, and all $f \in L^{p}(a,b;X)$, there exists a unique $u \in \mathrm{MR}_{p}(a,b)$ such that $$ \dot{u}(t) + A(t) u(t) = f(t) \quad t\text{ a.e. on } [a,b], \qquad u(a) = 0. $$

In principle, one can ask if the condition $A(t) \in \mathscr{MR}$ for each $t \in [0,\tau]$ will imply that the evolution equation \eqref{homogeneous initial value problem} is solved by an evolution family on $X$. This is true if $A(\cdot)$ satisfies a kind of continuity.
\begin{definition}\label{Relative-cont}
We say that  $A(\cdot):[0,\tau]\to \calL(D,X)$ is relatively continuous if for all $\varepsilon > 0$ there exist $\delta > 0$ and $\eta \geq 0$ such that for all $s,t \in [0,\tau]$, we have $$ \| A(t) x - A(s) x \| \leq \varepsilon \| x \|_D + \eta  \| x \| \qquad (x \in D) $$ whenever $|t-s| \leq \delta$.
\end{definition}
Now, if $A(t)\in \mathscr{MR}$ for each $t\in [0,\tau]$ and $A(\cdot)$ is relatively continuous, the authors of \cite{ACFP07} proved the existence of an evolution family $U:=(U(t,s))_{0\le s\le t\le \tau}$ on the trace space $\mathrm{Tr}_p$. If the operators $A(t)$, $t \in [0,\tau]$ are, in addition, accretive, they further showed that $U$ extends to a contractive evolution family on $X$.

We need the following regularity on the map $A(\cdot)$.
\begin{definition}
Let $\nu \in (1,\infty)$. The function $A(\cdot):[0,\tau]\to\calL(D,X)$ satisfies the relative $\nu$-Dini condition if there exist $\eta \geq 0$, $\omega : [0,\tau] \to [0,\infty)$ a continuous function with $\omega(0)=0$ and
\begin{align}\label{omega-condition}
\int_{0}^{\tau} \left( \frac{\omega(t)}{t} \right)^{\nu} \mathrm{d}t < \infty \end{align} such that for all $x \in D$, $s,t \in [0,\tau]$, we have: $$\| A(t) x - A(s) x \| \leq \omega \left( | t-s | \right) \| x \|_D + \eta  \| x \|.$$
\end{definition}
In the rest of this paper, we need the following condition:
\begin{itemize}
  \item [{\bf(H1)}] For any $t\in [0,\tau],$ $A(t)\in\mathscr{MR},$ and $A(\cdot):[0,\tau]\to\calL(D,X)$ satisfies the relative $\nu$-Dini condition for some $\nu\in (1,\infty)$.
\end{itemize}
\begin{remark}\label{referee-remark}
\begin{itemize}
  \item [{\rm (i)}] It is worth noting that if $A(\cdot)$ satisfies the relative $p$-Dini condition, then $A(\cdot)$ is relatively continuous.
  \item [{\rm (ii)}]  If $A(\cdot)$ is H\"older continuous, that is,                                                                                        \begin{align*}\|A(t)x-A(s)x\|\le |t-s|^\al \|x\|_D,\qquad t,s\in [0,\tau],\end{align*}   for some $\al\in (0,1)$, then $A(\cdot)$ satisfies the relative $\nu$-Dini condition for all $\nu\in (1,\frac{1}{1-\alpha})$.
\item [{\rm (iii)}] The condition {\bf(H1)} implies that $A(\cdot)\in \mathscr{MR}_q(0,\tau)$ for any $q\in (1,\infty),$ see \cite{ACFP07}.
  \end{itemize}
\end{remark}

The following result is a slightly modification of \cite[Theorem 3.3]{Kharou}.

\begin{theorem}\label{Kharou-thm1}
Assume that $A(\cdot): [0,\tau] \longrightarrow \mathcal{L}(D,X)$ satisfies the condition {\rm\bf(H1)} and  let $U:=(U(t,s))_{0\le s\le t\le \tau}$  be the associated evolution family on the trace space $\mathrm{Tr}_p$ for $p \in (1,\infty)$. Then the following assertions hold:
\begin{itemize}
  \item [{\rm (i)}] $U$ extends to a bounded evolution family on $X$.
  \item [{\rm (ii)}] For every $x\in X,$  the function $u$ given by $u(t) := U(t,0)x$ is the unique solution of the problem
\begin{align*}
\dot{u}(t) + A(t) u(t) = 0 \quad t\text{ a.e. on } [0,\tau], \qquad u(0) = x.
\end{align*}
\item [{\rm (iii)}]  For every $q \in (1,\nu),$ and $x \in X$, we have $v:t\mapsto v(t)= tU(t,0)x \in {\rm MR}_{q}(0,\tau)$ and
  \begin{align}\label{goud-estomate} \| v \|_{{\rm MR}_{q}(0,\tau)} \leq M \| x \|\end{align}
  for a constant $M \geq 0$, depending on $q$ but independent of $x \in X$. Moreover, the function $u$ given by $u(t) := U(t,0)x$ belongs to the space $$ C\left([0,\tau];X\right) \cap L^{q}_{loc}\left((0,\tau];D\right) \cap W^{1,q}_{loc}\left((0,\tau];X\right).$$
\end{itemize}
\end{theorem}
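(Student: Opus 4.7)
The plan is to build on the construction of $U$ on the trace space $\mathrm{Tr}_p$ (available from \cite{ACFP07} under condition {\bf(H1)}) and to leverage the relative $\nu$-Dini assumption to bootstrap from $\mathrm{Tr}_p$-estimates to $X$-estimates. I would establish (i) first to secure a uniform $X$-bound on $U$, then obtain (iii) by a Duhamel argument with the weight $t$, and finally deduce (ii) by density.

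For (i), fix $x\in \mathrm{Tr}_p$, $s\in[0,\tau)$, and freeze the coefficient at $s$: writing $T_s(\cdot)$ for the bounded analytic semigroup generated by $-A(s)$, the evolution family satisfies the integral equation
\[
U(t,s)x = T_s(t-s)x + \int_s^t T_s(t-r)\bigl(A(s)-A(r)\bigr)U(r,s)x\,dr.
\]
Using the analytic-semigroup smoothing estimate $\|T_s(t-r)\|_{\calL(X,D)}\lesssim (t-r)^{-1}$ together with the relative $\nu$-Dini bound $\|(A(s)-A(r))y\|\le \omega(|s-r|)\|y\|_D+\eta\|y\|$, one first controls $\|U(\cdot,s)x\|_{L^p(s,s+\delta;D)}$ on a short interval via a contraction argument (the factor $\omega(\delta)$ is taken small), then extends to $[s,\tau]$ by the evolution property. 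The embedding $\mathrm{MR}_p(s,\tau)\hookrightarrow C([s,\tau];\mathrm{Tr}_p)\hookrightarrow C([s,\tau];X)$ combined with the $X$-boundedness of $T_s$ and the $\eta$-part of the Dini bound yields $\|U(t,s)x\|_X\lesssim \|x\|_X$. Since $\mathrm{Tr}_p\hookrightarrow_d X$, the family $U$ extends by density to a bounded evolution family on all of $X$.

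For (iii), given $x\in X$, set $v(t):=tU(t,0)x$. Formally $\dot v(t)+A(t)v(t)=U(t,0)x$ with $v(0)=0$. By (i), the right-hand side belongs to $L^\infty(0,\tau;X)$ with norm controlled by $\|x\|$, hence to $L^q(0,\tau;X)$ for every $q\in(1,\infty)$; by Remark \ref{referee-remark}(iii), $A(\cdot)\in\mathscr{MR}_q(0,\tau)$. Applying the $L^q$-maximal regularity of $A(\cdot)$ to the Cauchy problem with datum $U(\cdot,0)x$ and zero initial condition yields $v\in \mathrm{MR}_q(0,\tau)$ together with the estimate \eqref{goud-estomate}. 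The local regularity of $u:=U(\cdot,0)x$ on $(0,\tau]$ then follows because $u(\varepsilon)=v(\varepsilon)/\varepsilon\in \mathrm{Tr}_q$ for every $\varepsilon>0$, so one restarts the $\mathrm{MR}_q$-theory from time $\varepsilon$; continuity at $0$ is provided by the strong continuity of the extension in (i).

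For (ii), approximate $x\in X$ by $x_n\in \mathrm{Tr}_p$; each $u_n(t)=U(t,0)x_n$ solves the Cauchy problem classically by the construction of $U$ on the trace space, and applying (iii) to the differences $x-x_n$ gives convergence in $\mathrm{MR}_q(\varepsilon,\tau)$ for every $\varepsilon>0$, so the limit $u=U(\cdot,0)x$ satisfies the equation a.e.\ on $(0,\tau]$ with $u(0)=x$ in $X$. Uniqueness follows from the uniqueness clause of Definition \ref{Def-nice} applied on $[\varepsilon,\tau]$. I expect the main obstacle to be the uniform $X$-bound in (i): the singular kernel $(t-r)^{-1}$ from analytic-semigroup smoothing must be absorbed by the Dini modulus $\omega(t-r)$, and closing the bootstrap is precisely where the hypothesis $\omega(\cdot)/(\cdot)\in L^\nu$ with $\nu>1$ is genuinely used.
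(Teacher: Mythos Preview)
The paper does not supply a proof of this result; it is quoted from \cite{Kharou}. What can be inferred from Remark~\ref{R2} and from the parallel computation in Proposition~\ref{Big-U}, however, is that the intended route for (i) goes through the \emph{difference} $w(t):=U(t,s)x-T_s(t-s)x$: since $\dot w+A(t)w=(A(s)-A(t))T_s(t-s)x$ with $w(s)=0$, and since analytic smoothing together with the $\nu$-Dini bound give
\[
\bigl\|(A(s)-A(t))T_s(t-s)x\bigr\|\ \lesssim\ \frac{\omega(t-s)}{t-s}\,\|x\|+\eta\|x\|\in L^q(s,\tau;X)\qquad(q\le\nu),
\]
the already-available $L^q$-maximal regularity of $A(\cdot)$ (Remark~\ref{referee-remark}(iii)) yields $\|w\|_{\mathrm{MR}_q(s,\tau)}\lesssim\|x\|$, and hence $\|U(t,s)x\|\lesssim\|x\|$ via $\mathrm{MR}_q\hookrightarrow C([s,\tau];X)$. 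Your arguments for (iii) and (ii) are exactly this maximal-regularity-on-an-auxiliary-problem idea and are correct; indeed Proposition~\ref{Big-U} reproduces your (iii) almost verbatim for the perturbed family~$V$.

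Your sketch for (i), however, does not close as written. You propose to control $\|U(\cdot,s)x\|_{L^p(s,s+\delta;D)}$ by a contraction argument and then invoke $\mathrm{MR}_p\hookrightarrow C([s,\tau];X)$. But that contraction argument only returns the trace-space estimate $\|U(\cdot,s)x\|_{\mathrm{MR}_p}\lesssim\|x\|_{\mathrm{Tr}_p}$: one cannot bound $\|U(\cdot,s)x\|_{L^p(s,s+\delta;D)}$ by $\|x\|_X$, since already the frozen piece $T_s(\cdot-s)x$ fails to lie in $L^p(s,s+\delta;D)$ when $x\notin\mathrm{Tr}_p$. Consequently, in your Duhamel integral the $\omega$-part contributes $\int_s^t\omega(r-s)\|U(r,s)x\|_D\,dr\lesssim\|x\|_{\mathrm{Tr}_p}$ rather than $\|x\|_X$, and the embedding step only recovers the bound you started from. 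Note also a mismatch of variables in your closing remark: the smoothing singularity is $(t-r)^{-1}$ while the Dini modulus in the integrand is $\omega(r-s)$, and these do not combine into $\omega/(\cdot)$. The remedy is precisely to subtract off $T_s(\cdot-s)x$ and apply maximal regularity of $A(\cdot)$ to the difference $w$: then the singular behaviour at the initial time is carried entirely by the \emph{source} $(A(s)-A(\cdot))T_s(\cdot-s)x$, where the factor $\omega(t-s)/(t-s)$ genuinely appears and is controlled in $L^q$ for $q\le\nu$ by the Dini hypothesis.
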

\begin{remark}
The condition {\bf(H1)} implies the existence of an evolution family $U$ on ${\rm Tr}_p$ for any $p\in (1,\infty)$, see \cite[Theorem 2.7 and Proposition 2.3]{ACFP07}.
\end{remark}

\section{The stability of  maximal $L^p$-regularity under non-autonomous Miyadera-Voigt kind of perturbations}\label{sec:21}

In this section we assume that the function $A(\cdot):[0,\tau]\to\calL(D,X)$ satisfies the condition {\bf(H1)}. Then  we discuss the maximal regularity a $A(\cdot)+P(\cdot),$ where $P(\cdot):[0,\tau]\to\calL(D,X)$ is a strongly measurable and bounded application such that:
\begin{itemize}
  \item [] {\bf(H2)} There exists $\mu\in (1,\infty)$ and a constant $c>0$ such that for $0\le s<\tau'<\tau$, we have
\begin{equation*}
\left(\int_{s}^{\tau'} \| P(t) U(t,s) x \|^{\mu} \, dt\right)^{\frac{1}{\mu}} \leq c \| x \|, \qquad ( x \in D).
\end{equation*}
\end{itemize}
The following result shows the maximal regularity of $A^{P}(\cdot):[0,\tau]\to \calL(D,X)$ defined in \eqref{perturbed-operator}.
\begin{theorem}\label{maximal regularity of A^P}
Assume that $A(\cdot)$ and $P(\cdot)$ satisfy the conditions {\bf(H1)-(H2)}. Then $A^{P}(\cdot) \in \mathscr{MR}_{q}(0,\tau)$ for every $q \in (1,\mu]$.
\end{theorem}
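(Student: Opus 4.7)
The plan is to set up a Neumann-series argument on short sub-intervals of $[0,\tau]$ and then invoke the sub-interval characterisation of $\mathscr{MR}_q(0,\tau)$ recalled just after Definition~\ref{Def-nice}. Fix $q\in(1,\mu]$ and a sub-interval $[a,b]\subset[0,\tau]$ with $b-a$ to be chosen small later.

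By {\bf(H1)} together with Remark~\ref{referee-remark}(iii), $A(\cdot)\in\mathscr{MR}_q(0,\tau)$, so there is a bounded solution operator
\begin{equation*}
L_A:L^q(a,b;X)\longrightarrow {\rm MR}_q(a,b),\qquad L_A f=u\ \text{with}\ \dot u+A(\cdot)u=f,\ u(a)=0,
\end{equation*}
which in the maximal-regularity setting is given by the Duhamel formula $(L_A f)(t)=\int_a^t U(t,s)f(s)\,ds$, with $U$ the evolution family from Theorem~\ref{Kharou-thm1}. Define also the pointwise multiplication
\begin{equation*}
M_P:{\rm MR}_q(a,b)\longrightarrow L^q(a,b;X),\qquad (M_P u)(t):=P(t)u(t),
\end{equation*}
which is bounded since $u\in L^q(a,b;D)$ and $P(\cdot)$ is uniformly bounded in $\calL(D,X)$.

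The heart of the argument is the contraction estimate
\begin{equation}\label{key-est-plan}
\|M_P L_A\|_{\calL(L^q(a,b;X))}\le c\,(b-a)^{1-1/\mu}.
\end{equation}
To obtain it, first use {\bf(H2)} to extend the assignment $x\mapsto P(\cdot)U(\cdot,s)x$ by density from $D$ to a bounded operator $X\to L^\mu(s,\tau;X)$ of norm at most $c$. A standard approximation (by $D$-valued step functions, using that $P(t)$ commutes with the Bochner integral defining $(L_A f)(t)$) then gives
\begin{equation*}
(M_P L_A f)(t)=\int_a^t P(t)U(t,s)f(s)\,ds\qquad\text{for a.e.}\ t\in[a,b].
\end{equation*}
Applying H\"older's inequality in $s$, then Fubini, and then a second H\"older (allowed because $q\le\mu$) together with {\bf(H2)} in the inner integral over $t$, yields
\begin{equation*}
\int_a^b\|(M_P L_A f)(t)\|^q\,dt\le (b-a)^{q-1}\!\int_a^b(b-s)^{1-q/\mu}c^q\|f(s)\|^q\,ds\le (b-a)^{q-q/\mu}c^q\|f\|_{L^q}^q,
\end{equation*}
which is \eqref{key-est-plan}.

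Now choose $\alpha_0>0$ so that $c\,\alpha_0^{1-1/\mu}<1$. For every $[a,b]$ of length less than $\alpha_0$, the operator $I+M_P L_A$ is invertible on $L^q(a,b;X)$ by Neumann series. For $f\in L^q(a,b;X)$, set $g:=(I+M_P L_A)^{-1}(M_P L_A f)$ and $u:=L_A(f-g)\in {\rm MR}_q(a,b)$; then $u(a)=0$, $M_P u=g$ and $\dot u+(A+P)u=f$, establishing existence. For uniqueness, if $v\in {\rm MR}_q(a,b)$ solves the homogeneous equation $\dot v+(A+P)v=0$, $v(a)=0$, then $v=-L_A M_P v$, so applying $M_P$ gives $(I+M_P L_A)M_P v=0$; hence $M_P v=0$ and therefore $v=0$. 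The sub-interval criterion recalled after Definition~\ref{Def-nice} then upgrades existence and uniqueness on short intervals to $A^P(\cdot)\in\mathscr{MR}_q(0,\tau)$.

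The main technical obstacle is the identity $(M_P L_A f)(t)=\int_a^t P(t)U(t,s)f(s)\,ds$ for general $f\in L^q(a,b;X)$: combining the density extension granted by {\bf(H2)} with an approximation argument to commute $P(t)$ with the Bochner integral is where the hypotheses really enter. Once this identity is in place, the contraction estimate is a routine double H\"older together with Fubini, and the remainder of the proof reduces to a Neumann series on a short interval.
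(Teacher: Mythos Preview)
Your proof is correct and follows essentially the same approach as the paper: both set up a Neumann-series argument on short sub-intervals by estimating the composition of the multiplication operator $\mathcal{P}=M_P$ with the solution operator $(\mathcal{A}+\mathcal{B})^{-1}=L_A$ via H\"older, Fubini and {\bf(H2)}, and then invoke the sub-interval criterion after Definition~\ref{Def-nice}. The only cosmetic differences are that the paper first reduces {\bf(H2)} from exponent $\mu$ to $q$ (obtaining the contraction factor $(b-a)^{1-1/q}$) while you keep $\mu$ and insert a second H\"older to get $(b-a)^{1-1/\mu}$, and that you spell out the density extension and the existence/uniqueness bookkeeping more explicitly than the paper does.
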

\begin{proof}
According to Remark \ref{referee-remark} we have $A(\cdot)\in\mathscr{MR}_q(0,\tau)$ for any $q\in (1,\infty)$. Let $[a,b]$ ($a<b$) be a sub-interval of $[0,\tau]$ and define the operators $\mathcal{A}$ and  $\mathcal{B}$ on $L^{q}(a,b;X)$ by:
\begin{align*} & D ( \mathcal{B} ) := \left\{ u \in W^{1,q}(a,b;X) : u(a) = 0 \right\},  \quad \mathcal{B}u := \dot{u}, \cr  & D ( \mathcal{A} ) := L^{q}(a,b;D),  \quad \mathcal{A}u := A(s) u(s) \; \left(s \in (a,b)\right).
\end{align*}
On the other hand, define
\begin{align*}
(\calP u)(t):=P(t)u(t),\quad u\in L^q([a,b],D),\quad a.e. t\in [a,b].
\end{align*}
Observe that for any $u\in L^q([a,b],D)$, $t\mapsto (\calP u)(t)$ is measurable and
\begin{align*}
\int^b_a \left\|(\mathcal{P}u)(t)\right\|^qdt\le \left(\sup_{t\in [0,\tau]}\|P(t)\|_{\calL(D,X)}\right)^q \|u\|^q_{L^q([a,b],D)}.
\end{align*}
This shows that $\calP:L^q([a,b],D)\to L^q([a,b],X)$, and \begin{align*}\|\calP u\|_{L^q([a,b],X)}\le \|P(\cdot)\|_\infty \|u\|_{L^q([a,b],D)},\qquad (u\in L^q([a,b],D)).
\end{align*}
Now let the operator $\mathcal{A}+\mathcal{B}+\mathcal{P}$ on $L^{q}(a,b;X)$ with domain $D( \mathcal{A}+\mathcal{B}+\mathcal{P} ) := D ( \mathcal{A} ) \cap D ( \mathcal{B} ) $. The fact that $A(\cdot)\in\mathscr{MR}_{q}(0,\tau)$ implies that the inverse $(\mathcal{A}+\mathcal{B})^{-1}$ exists in $L^q([a,b],X)$. Moreover, we can write
\begin{align*} \mathcal{A}+\mathcal{B}+\mathcal{P} = (I+\mathcal{P}(\mathcal{A}+\mathcal{B})^{-1})(\mathcal{A}+\mathcal{B}). \end{align*} Let $q\in (1,\mu]$ and $q'\in (1,\infty)$ such that $\frac{1}{q}+\frac{1}{q'}=1$. Then $P(\cdot)$ satisfies also the condition {\bf (H2)} if we replace $\mu$ with $q$. Now for $f \in L^{q}(a,b;X)$,
\begin{align*}
\| \mathcal{P}(\mathcal{A}+\mathcal{B})^{-1} f \|^q_{L^{q}(a,b;X)} &= \int_{a}^{b} \| [\mathcal{P}(\mathcal{A}+\mathcal{B})^{-1} f] (t) \|^q \, dt \\
 &= \int_{a}^{b} \left\| P(t) \int_{a}^{t} U(t,r) f(r) \, dr \right\|^q \, dt \\
 &\leq \int_{a}^{b} \left(  \int_{a}^{t}  \left\| P(t) U(t,r) f(r) \right\| \, dr \right)^q \, dt \\
&\leq (b-a)^{q/q'} \int_{a}^{b} \int_{a}^{t}  \left\| P(t) U(t,r) f(r) \right\|^q \, dr  \, dt \\
 &\leq (b-a)^{q/q'} \int_{a}^{b} \int_{r}^{b}  \left\| P(t) U(t,r) f(r) \right\|^q \, dt  \, dr \\
 &\leq (b-a)^{q/q'} c \int_{a}^{b} \left\| f(r) \right\|^q  \, dr\cr & = (b-a)^{q/q'} c\; \|f\|^q_{L^{q}(a,b;X)},
\end{align*}
by H\"older inequality, Fubini theorem and {\bf{(H2)}}. By choosing a small interval $[a,b],$ we can assume that $(b-a)^{q/q'} c<1$. This ends the proof due to the compactness of $[0,\tau]$ (see comments after Definition \ref{Def-nice}).
\end{proof}
\begin{remark}
A similar result for the maximal regularity of $A^P(\cdot)$ is obtained in \cite{ABDH20}, where Lebesgue extensions of the operators $P(t)$ are used to prove the result.
\end{remark}
The following proposition shows that to $A^{P}(\cdot)$, we can associate an evolution family $V:=(V(t,s))_{0\le s\le t\le \tau}$ on $X$ (compare with \cite[Proposition 4.3]{H06}).
\begin{proposition}\label{Big-U}
Assume that $A(\cdot)$ and $P(\cdot)$ satisfy the conditions {\bf(H1)-(H2)}. Then $A^{P}(\cdot)$ generates an evolution family $V:=(V(t,s))_{0\le s\le t\le \tau}$ on $X$ satisfying the following formula
\begin{align}\label{VCF-U}
V(t,s)x=U(t,s)x+\int^t_s V(t,\si)P(\si)U(\si,s)xd\si
\end{align}
for any $0\le s\le t\le \tau$ and $x\in D$. Furthermore for any $x\in X$,  the function $u:t \mapsto V(t,0)x$ solves the problem
belongs to the space $$ C\left([0,\tau];X\right) \cap L^{q}_{loc}\left((0,\tau];D\right) \cap W^{1,q}_{loc}\left((0,\tau];X\right),$$ for all $q \in (1,\inf\{\nu,\mu\}]$.
\end{proposition}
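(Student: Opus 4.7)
The overall plan is to construct $V$ on $X$ by a Dyson--Phillips iteration realizing \eqref{VCF-U}, and then to extract the regularity of $V(\cdot,0)x$ from the maximal $L^p$-regularity of $A^P(\cdot)$ established in Theorem \ref{maximal regularity of A^P} combined with Theorem \ref{Kharou-thm1}(iii) for $U$.

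First I would extend the bound in \textbf{(H2)} from $D$ to all of $X$: this is legitimate because $U$ extends to a bounded evolution family on $X$ by Theorem \ref{Kharou-thm1}(i), while $U(\sigma,s)x\in D$ for a.e.\ $\sigma\in(s,\tau]$ by the $L^{q}_{loc}((0,\tau];D)$ part of Theorem \ref{Kharou-thm1}(iii); so $\sigma\mapsto P(\sigma)U(\sigma,s)x$ is well defined a.e.\ and its $L^\mu$-norm passes to the limit as $D\ni x_n\to x\in X$. Next, for $x\in X$, I define the iterates
\begin{align*}
V_0(t,s)x:=U(t,s)x,\qquad V_{n+1}(t,s)x:=\int_s^t V_n(t,\sigma)P(\sigma)U(\sigma,s)x\,d\sigma\quad (n\ge 0),
\end{align*}
and set $V(t,s)x:=\sum_{n\ge 0}V_n(t,s)x$. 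Writing $M:=\sup_{0\le s\le t\le\tau}\|U(t,s)\|_{\calL(X)}$ and $\mu'$ for the H\"older conjugate of $\mu$, a straightforward induction using H\"older's inequality and the extended \textbf{(H2)} yields
\begin{align*}
\|V_n(t,s)\|_{\calL(X)}\le M\,c^n\,\frac{(t-s)^{n/\mu'}}{(n!)^{1/\mu'}},
\end{align*}
so the series converges absolutely in $\calL(X)$ by the ratio test, and a term-by-term summation of the recursion (with Fubini and dominated convergence justified by this bound) reproduces exactly \eqref{VCF-U}. The evolution-family identities and strong continuity of $V$ then transfer from those of $U$ via the Volterra identity \eqref{VCF-U}; identification of this $V$ with the trace-space evolution family produced by Theorem \ref{maximal regularity of A^P} together with \cite[Proposition 2.3]{ACFP07} is obtained by checking, for $x\in D$, that $V(\cdot,s)x$ is the unique ${\rm MR}_q$-solution of $\dot u+A^P(t)u=0$ with $u(s)=x$.

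For the regularity of $u(t):=V(t,0)x$ with $x\in X$, I set $w(t):=V(t,0)x-U(t,0)x$, differentiate \eqref{VCF-U} and use $\partial_t V(t,\sigma)=-A^P(t)V(t,\sigma)$ to obtain
\begin{align*}
\dot w(t)+A^P(t)w(t)=P(t)U(t,0)x,\qquad w(0)=0.
\end{align*}
The right-hand side lies in $L^\mu(0,\tau';X)\subset L^q(0,\tau';X)$ for every $\tau'\in(0,\tau)$ and every $q\in(1,\mu]$ by the extended \textbf{(H2)}, while $A^P(\cdot)\in\mathscr{MR}_q(0,\tau)$ by Theorem \ref{maximal regularity of A^P}, so maximal regularity of $A^P(\cdot)$ gives $w\in W^{1,q}(0,\tau';X)\cap L^q(0,\tau';D)$. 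Combining this with the regularity $U(\cdot,0)x\in C([0,\tau];X)\cap L^{q}_{loc}((0,\tau];D)\cap W^{1,q}_{loc}((0,\tau];X)$ from Theorem \ref{Kharou-thm1}(iii), which is available for $q\in(1,\nu)$, yields the claimed regularity of $V(\cdot,0)x$ for every $q\in(1,\inf\{\nu,\mu\}]$; the $w$-equation then shows that $u$ indeed solves $\dot u+A^P(t)u=0$ a.e.\ with $u(0)=x$.

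The main obstacle I foresee is the rigorous justification of the differentiation of \eqref{VCF-U} under the integral sign: the integrand $V(t,\sigma)P(\sigma)U(\sigma,s)x$ is only defined for a.e.\ $\sigma$, and the iterates $V_n$ inherit their joint measurability from $U$ only through the Dyson--Phillips construction; in particular, one must verify that $\partial_t V(t,\sigma)=-A^P(t)V(t,\sigma)$ holds in the appropriate a.e.\ sense. Once this technical point is settled, the remaining manipulations are routine applications of maximal $L^q$-regularity and H\"older's inequality.
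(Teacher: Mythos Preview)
Your proposal is correct but proceeds along a genuinely different route than the paper. The paper does not build $V$ by a Dyson--Phillips series; instead it first invokes Theorem~\ref{maximal regularity of A^P} together with \cite[Lemma~2.2]{ACFP07} to obtain an evolution family $V$ on the trace space ${\rm Tr}_q$, where the identity $\partial_t V(t,s)x=-A^P(t)V(t,s)x$ holds by construction for $x\in D$. Subtracting the corresponding equation for $U$ gives, for $x\in D$, your $w$-equation (with forcing $-P(\cdot)U(\cdot,s)x$, not $+P(\cdot)U(\cdot,s)x$); maximal regularity of $A^P(\cdot)$ combined with {\bf(H2)} then yields the bound $\|V(\cdot,s)x-U(\cdot,s)x\|_{{\rm MR}_q(s,\tau)}\le\kappa\gamma\|x\|$, and the embedding ${\rm MR}_q\hookrightarrow C([s,\tau];X)$ plus density extends $V$ to all of $X$. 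The formula~\eqref{VCF-U} is obtained \emph{a posteriori} from this $w$-equation rather than used as the definition. For the local regularity of $V(\cdot,0)x$ with $x\in X$, the paper does not split $V=U+w$ and appeal to Theorem~\ref{Kharou-thm1}(iii); it applies the weighting trick $v(t):=tV(t,0)x$, which solves $\dot v+A^P(t)v=V(t,0)x$ with $v(0)=0$ and right-hand side in $L^q(0,\tau;X)$ merely because $V$ is bounded on $X$.

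Your approach has the merit of producing \eqref{VCF-U} and an explicit $\calL(X)$-bound directly; the paper's has the merit that the differential equation is available from the start on ${\rm Tr}_q$, so the obstacle you flag never appears. That obstacle can in fact be removed inside your own argument without differentiating under the integral: once Theorem~\ref{maximal regularity of A^P} furnishes a trace-space family $\tilde V$, the paper's computation shows that $\tilde V$ also satisfies \eqref{VCF-U} on $D$; since your series bound $\|V_n(t,s)\|\le Mc^n(t-s)^{n/\mu'}/(n!)^{1/\mu'}$ forces uniqueness of bounded solutions of the Volterra equation, $\tilde V=V$ on $D$ and hence on $X$, and the a.e.\ differentiability of $V(t,\sigma)$ in $t$ is inherited from $\tilde V$.
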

\begin{proof}
According to Theorem \ref{maximal regularity of A^P}, $A^{P}(\cdot) \in \mathscr{MR}_{q}(0,\tau)$ for every $q \in (1,\mu]$. Then by \cite[Lemma 2.2]{ACFP07}, to $A^{P}(\cdot)$, we associate a unique evolution family $V:=(V(t,s))_{0\le s\le t\le \tau}$ on ${\rm Tr_q}$ for $q \in (1,\mu]$. In particular, for $x\in D$, the functions $t \mapsto V(t,s)x$ and $t \mapsto U(t,s)x$ belong to $\mathrm{MR}_{q}(s,\tau)$ for $q \in (1,p_0]$, so that
\begin{align*}
\frac{d}{dt}\left( V(t,s)x -U(t,s)x\right)&=-A^{P}(t)V(t,s)x+A(t)U(t,s)x\cr &= -A^{P}(t)(V(t,s)x-U(t,s)x) - P(t)U(t,s)x.
\end{align*}
 The fact that $A^{P}(\cdot)\in\mathscr{MR}_q(0,\tau)$  for $q \in (1,\mu]$ implies that
there exists a constant $\kappa>0$ such that
\begin{align}\label{MR-estimate-BigU}
\|V(\cdot,s)x -U(\cdot,s)x\|_{\mathrm{MR}_q(s,\tau)}&\le \kappa \|P(\cdot)U(\cdot,s)x\|_{L^q([s,\tau],X)}\le \kappa \ga \|x\|
\end{align}
for any $x\in X$, due to {\bf(H2)}. By using the estimate \eqref{MR-estimate-BigU} and the embedding $\mathrm{MR}_q(s,\tau) \hookrightarrow_d C([s,\tau],X)$ (densely and continuously), there exists a constant $M>0,$ independent of $t$ and $s$ such that
\begin{align*}
\|V(t,s)x -U(t,s)x\|\le M \|x\|
\end{align*}
for any $0\le s\le t\le \tau$ and $x\in {\rm Tr}_p$. Now the density of ${\rm Tr}_p$ in $X$ implies that $V$ has an extension to an evolution family on $X$. Now we can write
\begin{align*}
V(t,s)x -U(t,s)x=\int^t_s V(t,\si)P(\si)U(\si,s)xd\si
\end{align*}
for any $0\le s\le t\le \tau$ and $x\in X$. \\ On the other hand, let $q\in (1,\inf\{\nu,\mu\}]$ and $x\in {\rm Tr}_q$. By the $L^q$-maximal regularity of $A(\cdot)$, the function $v(t):=tV(t,0)x$ is the unique solution of the non-homogeneous problem
\begin{align*}
\begin{cases}
\dot{v}(t)+A^P(t)v(t)=V(t,0)x,& a.e.\; t\in [0,\tau],\cr u(0)=0,
\end{cases}
\end{align*}
and  there exists a constant $\kappa>0$ such that
\begin{align*}
\|v\|_{{\rm MR}_q(0,\tau)}\le \kappa \|V(\cdot,0)x\|_{L^q(0,\tau;X)}\le M'\|x\|
\end{align*}
for a constant $M'>0$ depending on $\kappa,M$ and $q$. By the density of ${\rm Tr}_q$ in $X,$ the above estimate holds also for every $x\in X$. In addition, since ${\rm MR}_q(0,\tau)\hookrightarrow_d C([0,\tau],X)$, we have
\begin{align*}
\|t V(t,0)x\|\le M' \|x\|
\end{align*}
for almost every $t\in [0,\tau]$ and $x\in X$. In particular, for every $x\in X$ and every $q\in (1,\inf\{\nu,\mu\}],$ we have
\begin{align*}
V(\cdot,0)x\in L^q_{loc}((0,\tau],D)\cap W^{1,q}_{loc}((0,\tau],X).
\end{align*}
\end{proof}


\section{The stability of admissibility of observation operators under non-autonomous Miyadera-Voigt kind of perturbations}\label{sec:3}
Throughout this section $X,$ $Y$ and $D$ are Banach spaces as in Section \ref{sec:1}. We take real numbers $p\in (1,\infty)$ and $\tau,\tau'>0$ such that $\tau'<\tau$. Let $A(\cdot):[0,\tau]\to\calL(D,X)$ be a strongly measurable and bounded function, and $C:D\to Y$ is a linear (observation) operator. From Section \ref{sec:1}, if $A(\cdot)$ satisfies the condition {\bf(H1)}, then we can associate to it a bounded evolution family $U=(U(t,s))_{0\le s\le t\le \tau}$ on the whole space $X$.

\begin{definition}\label{Admissibility-non-auto}
Let the assumption {\bf(H1)} be satisfied. The operator $C$ is $L^\theta$-admissible for $A(\cdot)$ or ($(C,A(\cdot))$ is $L^\theta$-admissible) with $\theta\in (1,\infty)$ if and only if
\begin{equation}\label{admissibleobservation0}
\left(\int_{s}^{s+\al} \| C U(t,s) x \|_{Y}^{\theta} \, dt\right)^{\frac{1}{\theta}} \leq \gamma \| x \|
\end{equation}
for $s\in [0,\tau),$ $x\in D$, and some constant $\ga,\al>0$ such that $s\le s+\al<\tau$.
\end{definition}
The following result shows another reformulation of the admissibility estimate \eqref{admissibleobservation0}.
\begin{proposition}\label{equiv-admiss}
Let the assumption {\bf(H1)} be satisfied.  Then  $(C,A(\cdot))$ is $L^\theta$-admissible if and only if there exists $\ga>0$ such that for any $s\in [0,\tau),$ we have
\begin{equation}\label{admissibleobservation}
\left(\int_{s}^{\tau} \| C U(t,s) x \|_{Y}^{\theta} \, dt\right)^{\frac{1}{\theta}} \leq \gamma \| x \|, \qquad (x \in D).
\end{equation}
\end{proposition}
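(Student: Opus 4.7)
The proof splits into the two implications. The backward direction is immediate: if \eqref{admissibleobservation} holds with some constant \(\gamma>0\), then for every \(s\in[0,\tau)\) and every \(\alpha>0\) with \(s+\alpha<\tau\), monotonicity of integration gives
\begin{equation*}
\int_s^{s+\alpha}\|CU(t,s)x\|_Y^\theta\,dt \leq \int_s^\tau \|CU(t,s)x\|_Y^\theta\,dt \leq \gamma^\theta \|x\|^\theta,
\end{equation*}
recovering \eqref{admissibleobservation0} with the same constant.

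For the forward direction, my plan is a partition-and-compose argument based on the evolution property \(U(t,s)=U(t,r)U(r,s)\). Let \(\gamma_0,\alpha_0>0\) be the uniform constants provided by Definition~\ref{Admissibility-non-auto}, fix \(s\in[0,\tau)\) and \(x\in D\). By Theorem~\ref{Kharou-thm1}(iii), \(U(t,s)x\in D\) for almost every \(t\in(s,\tau]\). I would then select a partition \(s=t_0<t_1<\cdots<t_N=\tau\) with \(t_{k+1}-t_k\le \alpha_0\) and \(N\leq \lceil \tau/\alpha_0\rceil+1\) independent of \(s\), choosing the interior nodes \(t_1,\dots,t_{N-1}\) inside the full-measure set on which \(U(t_k,s)x\in D\). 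Setting \(y_k:=U(t_k,s)x\in D\), the evolution property gives
\begin{equation*}
\int_s^\tau \|CU(t,s)x\|_Y^\theta\,dt = \sum_{k=0}^{N-1}\int_{t_k}^{t_{k+1}}\|CU(t,t_k)y_k\|_Y^\theta\,dt,
\end{equation*}
and on each piece the admissibility estimate \eqref{admissibleobservation0} applied at \(t_k\) (licit since \(y_k\in D\)) yields \(\int_{t_k}^{t_{k+1}}\|CU(t,t_k)y_k\|_Y^\theta\,dt \leq \gamma_0^\theta \|y_k\|^\theta\). Combining with \(\|U(t_k,s)\|_{\calL(X)}\leq M\) from Theorem~\ref{Kharou-thm1}(i), this produces \(\int_s^\tau\|CU(t,s)x\|_Y^\theta\,dt \leq N\gamma_0^\theta M^\theta\|x\|^\theta\), hence \eqref{admissibleobservation} with \(\gamma:=N^{1/\theta}\gamma_0 M\) uniform in \(s\).

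The main technical obstacle I anticipate is the strict inequality \(t_k+\alpha_0<\tau\) built into Definition~\ref{Admissibility-non-auto}, which prevents invoking admissibility at partition points lying too close to \(\tau\). My plan is to bypass this by first running the partition-and-compose argument on \([s,\tau-\varepsilon]\) for arbitrary small \(\varepsilon>0\), choosing the rightmost interior node in \((\tau-\alpha_0-\varepsilon,\,\tau-\alpha_0)\) so that \(t_{N-1}+\alpha_0<\tau\) still holds while keeping \(N\) uniformly bounded, thereby obtaining a bound on \(\int_s^{\tau-\varepsilon}\|CU(t,s)x\|_Y^\theta\,dt\) independent of \(\varepsilon\), and then letting \(\varepsilon\to 0\) by monotone convergence to conclude \eqref{admissibleobservation}.
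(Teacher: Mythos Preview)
Your argument is correct, but it takes a genuinely different route from the paper. The paper makes a \emph{single} split at $t_0=s+\alpha$: the first piece $\int_s^{t_0}$ is bounded by admissibility, while the long remainder is handled via $C\in\calL(D,Y)$ and the smoothing of $U$, namely
\[
\int_{t_0}^{\tau}\|CU(r,s)x\|_Y^\theta\,dr \le \|C\|_{\calL(D,Y)}^\theta\, c^\theta\, \|U(t_0,s)x\|_{{\rm Tr}_\theta}^\theta \le \frac{(c\|C\|_{\calL(D,Y)})^\theta}{\alpha^{\theta/\theta'}}\,\|x\|^\theta,
\]
using that $U(\cdot,t_0)$ maps ${\rm Tr}_\theta$ boundedly into $L^\theta(t_0,\tau;D)$ (maximal regularity of $A(\cdot)$) together with the trace-space estimate $\|U(t_0,s)x\|_{{\rm Tr}_\theta}\lesssim (t_0-s)^{-1/\theta'}\|x\|$. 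Thus the paper exploits the analytic structure provided by {\bf(H1)} directly and needs no iteration. Your partition-and-compose argument is more elementary and semigroup-flavoured: it relies only on the evolution identity, the uniform $X$-bound on $U$ from Theorem~\ref{Kharou-thm1}(i), and the a.e.-in-$D$ regularity from Theorem~\ref{Kharou-thm1}(iii), at the price of the $x$-dependent choice of interior nodes and the $\varepsilon$-limit near $\tau$. Both give constants uniform in $s$; the paper's route is shorter in this setting, while yours would transfer to evolution families enjoying merely pointwise smoothing $U(t,s)X\subset D$ for a.e.\ $t>s$, without invoking the full trace-space machinery.
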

\begin{proof}
Obviously the condition \eqref{admissibleobservation} implies \eqref{admissibleobservation0}. Conversely, let $x\in D,$ $\theta\in (1,\infty),$ $t,s\in [0,\tau)$ and $t_0\in [s,t]$. Then
\begin{align*}
\int^t_s \|CU(r,s)x\|^\theta dr& =\int^{t_0}_s \|CU(r,s)x\|^\theta dr+ \int^t_{t_0} \|CU(r,s)x\|^\theta dr \cr & \le \ga^\theta \|x\|^\theta+  \int^t_{t_0} \|CU(r,s)x\|^\theta dr,
\end{align*}
due to \eqref{admissibleobservation0}. On the other hand, if we denote by $c$ the constant of the maximal regularity of $A(\cdot)$, then
\begin{align*}
 \int^t_{t_0} \|CU(r,s)x\|^\theta dr& \le \|C\|^\theta_{\calL(D,Y)} \int^t_{t_0} \|U(r,t_0)U(t_0,s)x\|^\theta_D dr\cr & \le \|C\|^\theta_{\calL(D,Y)} c^\theta \|U(t_0,s)x\|^\theta_{{\rm Tr}_\theta}\cr & \le \frac{(c\|C\|_{\calL(D,Y)})^\theta}{(t_0-s)^{\frac{\theta}{\theta'}}} \|x\|^\theta
\end{align*}
for $\theta'>1$ with $\frac{1}{\theta}+\frac{1}{\theta'}=1$. From the above two estimates, we deduce that
\begin{align*}
\int^t_s \|CU(r,s)x\|^\theta dr \le \left(\ga^\theta+\frac{(c\|C\|_{\calL(D,Y)})^\theta}{(t_0-s)^{\frac{\theta}{\theta'}}}\right) \|x\|^\theta.
\end{align*}
The result follows by choosing $t_0=s+\al$.
\end{proof}
The estimate \eqref{admissibleobservation0} seems to be a natural generalization of the estimate \eqref{CT-estim}, see also \cite[Definition 2.1]{HHO19}. On the other hand, one can see that $(C,A(\cdot))$ is $L^\theta$-admissible if and only if $(C,\la+A(\cdot))$ is for $\la\in\C$.

The fact that $(C,A(\cdot))$ is admissible is not easy to verify in the examples. In some cases, individual admissibility may suffice. The relation between the admissibility of observation operators for $A(\cdot)$ and for each single operator $A(t_0)$ ($t_0\in [0,\tau]$) is given in the following result which is taken from \cite[Theorem 3.8]{Kharou}.
\begin{theorem}\label{Kharou-thm2}
Let $A(\cdot): [0,\tau] \to \mathcal{L}(D,X)$ satisfy the condition {\bf(H1)}. Then $\left( C, A\left(\cdot\right) \right)$ is admissible, if and only if $\left(C,A(t)\right)$ is admissible for all $t \in [0,\tau)$.
\end{theorem}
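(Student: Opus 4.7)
The plan is to use a freezing-coefficient identity comparing the evolution family $U$ with the analytic semigroup $T_{t_0}(h) := e^{-hA(t_0)}$ generated by $-A(t_0)$. Differentiating $r \mapsto T_{t_0}(t-r) U(r,s) x$ over $r \in [s,t]$ and using $\partial_r U(r,s)x = -A(r) U(r,s)x$ gives the Duhamel-type identity
\[
T_{t_0}(t-s)x \;=\; U(t,s)x \;+\; \int_{s}^{t} T_{t_0}(t-r)\bigl[A(r)-A(t_0)\bigr] U(r,s)x \, dr,
\]
valid for $x \in D$ thanks to Theorem \ref{Kharou-thm1}(iii), which supplies the required $L^q_{\mathrm{loc}}(D)$-regularity of $r\mapsto U(r,s)x$.

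For the necessity direction, I fix $t_0 \in [0,\tau)$, set $s = t_0$, apply $C$, and take the $L^\theta$-norm on $[t_0, t_0+\alpha]$. The first piece $\|C U(t_0 + \cdot, t_0) x\|_{L^\theta}$ is bounded by $\gamma\|x\|$ from the assumed admissibility of $(C, A(\cdot))$. The convolution piece equals $Cw$, where $w$ solves $\dot w + A(t_0) w = [A(\cdot) - A(t_0)] U(\cdot, t_0) x$ with $w(t_0) = 0$. By the $L^\theta$-maximal regularity of $A(t_0)$,
$\|w\|_{L^\theta(t_0,t_0+\alpha;D)} \le c_1 \|[A(\cdot) - A(t_0)] U(\cdot, t_0) x\|_{L^\theta(t_0,t_0+\alpha;X)}$;
combining the relative $\nu$-Dini bound from \textbf{(H1)} with the boundedness of $U$ and Theorem \ref{Kharou-thm1}(iii) (pairing $\omega(\rho)/\rho$ against $\rho\|U(t_0+\rho, t_0)x\|_D$ via Hölder) controls the right-hand side by a constant times $\|x\|$. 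Since $C \in \calL(D,Y)$, this yields the admissibility estimate for $(C, A(t_0))$.

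For the sufficiency direction, I set $t_0 = s$ in the identity and apply $C$:
\[
C U(t,s)x \;=\; C T_s(t-s) x \;-\; C \int_s^t T_s(t-r)[A(r) - A(s)] U(r,s) x \, dr.
\]
The $L^\theta([s,s+\alpha],Y)$-norm of the first term is controlled by the admissibility of $(C, A(s))$; the second term is handled exactly as in the necessity proof. To close the argument I need the admissibility constants $\gamma_s$ and the maximal-regularity constants of $A(s)$ to be uniform in $s \in [0,\tau]$, which I would obtain from a finite covering of the compact interval $[0,\tau]$ combined with the relative Dini continuity (giving stability of these constants under small perturbations of the frozen operator).

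The main obstacle is the sufficiency direction, in two aspects: (a) securing uniform-in-$s$ control of the admissibility and maximal-regularity constants, which is not automatic from the pointwise hypothesis; and (b) handling the $\omega(\rho)\|x\|_D$ part of the relative Dini estimate inside the convolution, which requires pairing the Dini weight $\omega(\rho)/\rho$ against the time-weighted $L^q(D)$-bound on $U(\cdot,s)x$ from Theorem \ref{Kharou-thm1}(iii) via Hölder, so that the exponents $\theta$, $\nu$, and $q$ remain compatible. Once admissibility is established on $[s, s+\alpha]$ with $\alpha$ independent of $s$, Proposition \ref{equiv-admiss} lifts it to all of $[s,\tau)$.
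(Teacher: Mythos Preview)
The paper does not prove this theorem; it is quoted from \cite[Theorem~3.8]{Kharou}. The only trace of the argument in the present paper is Remark~\ref{R2}, which records the key estimate extracted from that proof: for $p\in(1,\nu]$ and $x\in X$,
\[
\bigl\|U(\cdot,a)x-\T^a(\cdot-a)x\bigr\|_{{\rm MR}_p(a,\tau)}\le M\|x\|,
\]
with $M$ independent of $a$. Once this is in hand, both directions are immediate: write $CU(t,a)x=C\T^a(t-a)x+C\bigl(U(t,a)x-\T^a(t-a)x\bigr)$ and bound the $L^\theta(a,\tau;Y)$-norm of the correction by $\|C\|_{\calL(D,Y)}\,M\|x\|$.

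Your strategy is the same in spirit, but your Duhamel identity places the \emph{evolution family} in the forcing, $f(r)=[A(r)-A(t_0)]U(r,t_0)x$, and then invokes maximal regularity of the \emph{frozen} operator $A(t_0)$. This is what creates your obstacle~(b): to turn $\omega(\rho)\|U(t_0+\rho,t_0)x\|_D$ into $\|x\|$ you must pair $\omega(\rho)/\rho$ against $\rho\|U(t_0+\rho,t_0)x\|_D$ via H\"older and Theorem~\ref{Kharou-thm1}(iii), and the exponent bookkeeping only closes for $\theta$ roughly below $\nu/2$. The route indicated by Remark~\ref{R2} reverses the roles: set $w:=U(\cdot,a)x-\T^a(\cdot-a)x$, observe
\[
\dot w + A(\cdot)\,w=[A(a)-A(\cdot)]\,\T^a(\cdot-a)x,\qquad w(a)=0,
\]
and apply maximal regularity of $A(\cdot)$. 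Now the forcing contains the \emph{semigroup}, and the analytic estimate $\|\T^a(\rho)x\|_D\le c\,\rho^{-1}\|x\|$ converts the Dini bound directly into $(\omega(\rho)/\rho)\|x\|\in L^\nu$---no H\"older, and the full range $p\in(1,\nu]$ comes out. This also makes the maximal-regularity constant automatically uniform in $a$ (it is the constant of $A(\cdot)$, not of $A(t_0)$), and the constant $c$ in the analytic estimate is uniform by relative continuity and compactness, cf.\ \cite{ACFP07}.

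Your obstacle~(a), the uniformity in $s$ of the individual admissibility constants $\gamma_s$ needed for the sufficiency direction, is genuine and is not resolved by anything in the present paper; indeed, the Proposition following Theorem~\ref{Kharou-thm2} imposes a \emph{stronger} structural hypothesis precisely to force $\mathscr{O}^\theta_Y(A(t_0))=\mathscr{O}^\theta_Y(A(t_1))$. Whatever argument \cite{Kharou} uses to pass from pointwise to uniform admissibility under {\bf(H1)} alone is not reproduced here, so on that point you have correctly located the gap.
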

The following proposition shows a condition on $A(\cdot)$ for which the property ``$\left(C,A(t)\right)$ is $L^\theta$-admissible for all $t \in [0,\tau]$''  is equivalent to ``there exists $t_0\in [0,\tau]$ such that $(C,A(t_0))$ is $L^\theta$-admissible''.
\begin{proposition}
Let $A(\cdot):[0,\tau]\to\calL(D,X)$ be a strongly measurable and bounded function. Assume that $A(t) \in \mathscr{MR}$ for every $t \in [0,\tau]$ and that there exist $M, \eta > 0$ and $\beta \in (1,\infty)$ such that
\begin{equation*}
\| A(t)x - A(s)x \| \leq M \| x \|_{{\rm Tr}_\beta} + \eta \|x\|, \qquad \left( t,s \in [0,\tau] ,  x \in D \right).
\end{equation*}
Then $$ \mathscr{O}^\theta_Y(A(t_0)) = \mathscr{O}^\theta_Y(A(t_1)) $$ for every $\theta < \frac{\beta}{\beta-1}$ and $t_0 , t_1 \in [0,\tau]$.
\end{proposition}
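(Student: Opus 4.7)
The plan is to reduce the statement to the autonomous perturbation theorem of \cite{HI06}, which asserts that $\mathscr{O}^\theta_Y(A) = \mathscr{O}^\theta_Y(A+P)$ whenever $P \in \mathscr{O}^\theta_X(A)$. Fix $t_0, t_1 \in [0,\tau]$ and set $P := A(t_1) - A(t_0) \in \calL(D,X)$. Writing $A(t_1) = A(t_0) + P$, the conclusion follows as soon as I can show that $P$ is an $L^\theta$-admissible ($X$-valued) observation operator for $A(t_0)$ whenever $\theta < \beta/(\beta-1)$.

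To that end, let $(S_0(t))_{t\geq 0}$ denote the bounded analytic semigroup associated with $A(t_0)$ (after a harmless scalar shift if needed, which leaves $\mathscr{O}^\theta_X$ and $\mathscr{O}^\theta_Y$ unchanged). For $x \in D$, the hypothesis gives
\begin{align*}
\|P\, S_0(t)x\| \leq M\,\|S_0(t)x\|_{{\rm Tr}_\beta} + \eta\,\|S_0(t)x\|.
\end{align*}
The second term is bounded by $C\|x\|$ uniformly in $t \in [0,\al]$ and contributes nothing harmful after integration. For the first term I would combine the interpolation identification ${\rm Tr}_\beta = (X,D)_{1-1/\beta,\beta}$ with the standard real-interpolation inequality
\begin{align*}
\|y\|_{{\rm Tr}_\beta} \leq c\,\|y\|^{1/\beta}\,\|y\|_D^{1-1/\beta}, \qquad y \in D,
\end{align*}
and the analytic-semigroup bounds $\|S_0(t)\|_{\calL(X)} \leq K$, $\|S_0(t)\|_{\calL(X,D)} \leq K/t$ valid on $(0,\al]$, to conclude
\begin{align*}
\|S_0(t)x\|_{{\rm Tr}_\beta} \leq C\,t^{-(1-1/\beta)}\,\|x\|.
\end{align*}
Raising to the power $\theta$ and integrating, the integral $\int_0^\al t^{-\theta(1-1/\beta)}\,dt$ converges precisely when $\theta(1-1/\beta)<1$, that is, $\theta < \beta/(\beta-1)$. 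In that range $P \in \mathscr{O}^\theta_X(A(t_0))$.

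The autonomous perturbation theorem of \cite{HI06} then delivers $\mathscr{O}^\theta_Y(A(t_1)) = \mathscr{O}^\theta_Y(A(t_0)+P) = \mathscr{O}^\theta_Y(A(t_0))$, as required. The main obstacle is the sharp singular estimate of $\|S_0(t)x\|_{{\rm Tr}_\beta}$ as $t \to 0^+$: the exponent $1-1/\beta$ produced by interpolation is exactly what matches the integrability threshold $\theta < \beta/(\beta-1)$ stated in the proposition, so any weaker control on the trace norm would miss the optimal range. Everything else reduces to routine use of analytic-semigroup and real-interpolation machinery together with the autonomous perturbation result.
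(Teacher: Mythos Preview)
Your argument is correct and arrives at the same threshold $\theta<\beta/(\beta-1)$ via the same key estimate $\|e^{-rA}\|_{\calL(X,\mathrm{Tr}_\beta)}\le C\,r^{-(1-1/\beta)}$, but the packaging differs from the paper's. The paper does not invoke \cite{HI06} as a black box; instead it writes the variation-of-constants identity
\begin{align*}
e^{-tA(t_1)}x = e^{-tA(t_0)}x + \int_0^t e^{-(t-r)A(t_0)}\bigl(A(t_0)-A(t_1)\bigr)e^{-rA(t_1)}x\,dr,
\end{align*}
applies $C$, and uses \cite[Prop.~3.3]{H05} (the $L^\theta\to L^\theta$ boundedness of the map $f\mapsto C\int_0^{\cdot}e^{-(\cdot-r)A(t_0)}f(r)\,dr$ for admissible $C$, which is precisely the lemma that drives \cite{HI06}) to reduce to showing that $r\mapsto (A(t_0)-A(t_1))e^{-rA(t_1)}x$ lies in $L^\theta((0,\tau);X)$; that in turn follows from the hypothesis together with the analytic-semigroup bound on $\|e^{-rA(t_1)}x\|_{\mathrm{Tr}_\beta}$. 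So you estimate $P$ against the semigroup of $A(t_0)$ and then cite the perturbation theorem, whereas the paper estimates $-P$ against the semigroup of $A(t_1)$ and reproves in place the portion of \cite{HI06} that is needed. Your route is slightly more economical; the paper's is more self-contained and makes the origin of each constant explicit. Both rest on exactly the same analytic-semigroup/interpolation ingredient, so neither gains or loses any generality.
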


\begin{proof}
Let $0 < t \leq \tau$, $x \in D$ and $\theta \in (1,\infty)$. Consider the function $v:[0,t] \ni r \mapsto e^{-(t-r)A(t_0)} e^{-rA(t_1)} x$. Then $v \in W^{1,\theta}(0,t;X)$ and
\begin{align*}
\frac{d}{dr} v(r) &= A(t_0) e^{-(t-r)A(t_0)} e^{-rA(t_1)} x - e^{-(t-r)A(t_0)} A(t_1) e^{-rA(t_1)} x \\
 &= e^{-(t-r)A(t_0)} \left( A(t_0) - A(t_1) \right) e^{-rA(t_1)} x .
\end{align*}
By integrating between $0$ and $t$, we obtain $$ e^{-t A(t_1)} x = e^{-t A(t_0)} x + \int_{0}^{t} e^{-(t-r)A(t_0)} \left( A(t_0) - A(t_1) \right) e^{-rA(t_1)} x dr .$$
Thus, according to \cite[Prop.3.3]{H05}, the admissibility of $C$ for $A(t_0)$ implies
\begin{align}\label{key-estimate}
\begin{split}
\int_{0}^{\tau} \| C e^{-t A(t_1)} x &- C e^{-t A(t_0)} x \|^\theta dt  \cr & \leq \int_{0}^{\tau} \left\| C \int_{0}^{t}  e^{-(t-r)A(t_0)} \left( A(t_0) - A(t_1) \right) e^{-rA(t_1)} x dr \right\|^\theta dt \\
 &\leq k_{\tau} \int_{0}^{\tau} \| \left( A(t_0) - A(t_1) \right) e^{-rA(t_1)} x \|^\theta dr \\
 &\leq k_{\tau} \int_{0}^{\tau} \left( M \| e^{-rA(t_1)} x \|_{{\rm Tr}_\beta} + \eta \| e^{-rA(t_1)} x\| \right)^\theta dr \\
 &\leq 2^\theta M^\theta k_{\tau} \int_{0}^{\tau} \frac{1}{r^{\theta\frac{\beta-1}{\beta} }} dr \|x\|^\theta + 2^\theta \eta^\theta c_{\tau}^\theta \|x\|^\theta.
\end{split}
\end{align}
\end{proof}
\begin{remark}\label{B-admissible}
Let $A(\cdot):[0,\tau]\to\calL(D,X)$ be a strongly measurable and bounded function, $\theta\in (1,\infty)$ and $B\in \mathscr{O}^\theta_{Y}(A(t))$ for any $t\in [0,\tau]$. Assume that $A(t) \in \mathscr{MR}$ for every $t \in [0,\tau]$ and that there exist $M, \eta > 0$  such that
\begin{align}\label{B-estimate}
\| A(t)x - A(s)x \| \leq M \| Bx \|+ \eta \|x\|, \qquad \forall t,s \in [0,\tau] , \forall x \in D .
\end{align}
Then $\mathscr{O}^\theta_Y(A(t_0)) = \mathscr{O}^\theta_Y(A(t_1))$ for any $t_0,t_1\in [0,\tau]$. Indeed, assume that $C\in \mathscr{O}^\theta_Y(A(t_0))$. By combining \eqref{key-estimate} with \eqref{B-estimate}, we obtain
\begin{align*}
\int_{0}^{\tau} \| C e^{-t A(t_1)} x \|^\theta dt&\le  \gamma_{\tau}^\theta \|x\|^\theta + k_{\tau}\int_{0}^{\tau} \left( M\| Be^{-rA(t_1)} x \| + \eta \| e^{-rA(t_1)} x\| \right)^\theta dr\cr \le & c \|x\|^\theta,
\end{align*}
for a constant $c:=c(\theta,\tau)>0,$ due to the admissibility of $B$ for $A(t_1)$.
  \end{remark}
The following is the main result of this paper, which gives the invariance of admissibility of observation under unbounded perturbations.
\begin{theorem}\label{main1}
Assume that $A(\cdot)$ and $P(\cdot)$ satisfy the conditions {\bf(H1)-(H2)}. Then
\begin{align*}
\mathscr{O}^\theta_{Y}\left( A(\cdot) \right) = \mathscr{O}^\theta_{Y}\left( A^{P}(\cdot) \right) \qquad (1<\theta \leq \mu),
\end{align*}
where $\mu$ is from {\bf(H2)}.
\end{theorem}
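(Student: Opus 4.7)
The strategy is to transfer admissibility between the evolution families $U$ (of $A(\cdot)$) and $V$ (of $A^{P}(\cdot)$) via two variation-of-constants identities. The first is the formula of Proposition~\ref{Big-U}. The second, which we shall call the ``mirror'' identity, is obtained by noting that $u(t):=V(t,s)x$ solves $\dot u + A(t) u = -P(t)u$ with $u(s)=x$, so that Duhamel's principle applied to the evolution family $U$ yields
\begin{equation*}
V(t,s)x = U(t,s)x - \int_s^t U(t,\sigma) P(\sigma) V(\sigma,s) x \, d\sigma.
\end{equation*}
Both relations extend from $x \in D$ to $x \in X$ by density, and the integrands are a.e.\ well-defined thanks to the local $L^q$-membership in $D$ of the trajectories guaranteed by Theorem~\ref{Kharou-thm1}(iii) and Proposition~\ref{Big-U}.

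The crucial technical step is an {\bf (H2)}-type estimate with $V$ in place of $U$: there exists $c' > 0$ such that
\begin{equation*}
\Bigl( \int_s^\tau \|P(t) V(t,s)x\|^\mu \, dt \Bigr)^{1/\mu} \le c' \|x\|, \qquad x \in X.
\end{equation*}
To prove it, set $g(t):=P(t) V(t,s)x$ and $f(t):=P(t) U(t,s)x$; applying $P(t)$ to the mirror identity yields an integral equation of the form $(I + \Xi)g = f$ for the Volterra-type operator
\begin{equation*}
(\Xi h)(t) := \int_s^t P(t) U(t,\sigma) h(\sigma) \, d\sigma
\end{equation*}
on $L^\mu([s,\tau']; X)$. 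Minkowski's integral inequality together with {\bf (H2)} and H\"older yield $\|\Xi\| \le c (\tau'-s)^{1-1/\mu}$. On a sufficiently short sub-interval, $I+\Xi$ is invertible by a Neumann series, giving $\|g\|_{L^\mu} \le 2c\|x\|$; compactness of $[0,\tau]$ together with the cocycle identity $V(t,s) = V(t,t_i) V(t_i,s)$ and the uniform boundedness of $V$ on $X$ allow one to patch these local bounds into the global estimate.

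With this estimate in hand, both inclusions follow quickly. For $\mathscr{O}^\theta_Y(A(\cdot)) \subseteq \mathscr{O}^\theta_Y(A^P(\cdot))$, apply $C$ to the mirror identity and compute, by Minkowski and Proposition~\ref{equiv-admiss}:
\begin{equation*}
\|CV(\cdot,s)x\|_{L^\theta([s,\tau];Y)} \le \gamma \|x\| + \gamma \int_s^\tau \|P(\sigma) V(\sigma,s) x\| \, d\sigma \le \gamma\bigl(1 + c' (\tau-s)^{1-1/\mu}\bigr) \|x\|
\end{equation*}
for $x \in D$, with H\"older's inequality on the pair $(\mu,\mu')$ used at the last step. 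The reverse inclusion follows by the same computation based on a rearrangement of the formula in Proposition~\ref{Big-U} expressing $U$ in terms of $V$ and a time integral of $V\,P\,U$; here no analog of the auxiliary estimate is required, since $P(\sigma) U(\sigma,s)x$ already appears under the integral and its $L^\mu$-bound is exactly {\bf (H2)}. The main obstacle is precisely the auxiliary estimate for $P(\cdot)V(\cdot,s)x$; a minor technical point is justifying the interchange of $P(t)$ and $C$ with the Bochner integrals, which uses the a.e.\ regularity of the trajectories.
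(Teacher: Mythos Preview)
Your argument is correct but follows a genuinely different route from the paper's. The paper's proof bypasses the Duhamel formulas entirely for the admissibility transfer: it relies on the maximal-regularity estimate \eqref{MR-estimate-BigU} from Proposition~\ref{Big-U}, namely $\|V(\cdot,s)x - U(\cdot,s)x\|_{\mathrm{MR}_\theta(s,\tau')}\le \kappa\gamma\|x\|$. Since $C\in\mathcal{L}(D,Y)$, this gives $\|C(V-U)(\cdot,s)x\|_{L^\theta(Y)}\le \|C\|\kappa\gamma\|x\|$ directly, and the triangle inequality finishes the forward inclusion in one line. The auxiliary estimate $\|P(\cdot)V(\cdot,s)x\|_{L^\mu}\le \tilde\gamma\|x\|$ is obtained the same way---just replace $C$ by $P(t)$ and use the uniform bound on $\|P(t)\|_{\mathcal{L}(D,X)}$---and then the converse follows by symmetry. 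Your approach instead stays at the level of the variation-of-constants formulas and Minkowski's integral inequality; your Volterra/Neumann-series derivation of the auxiliary estimate is essentially a reprise of the bound $\|\mathcal{P}(\mathcal{A}+\mathcal{B})^{-1}\|\le c^{1/\mu}(b-a)^{1/\mu'}$ already established in the proof of Theorem~\ref{maximal regularity of A^P}. The paper's route is shorter because it cashes in on the $\mathrm{MR}_\theta$-bound for $V-U$ that is already available; yours is closer in spirit to the classical autonomous argument of \cite{HI06} and does not invoke maximal regularity in the comparison step itself. One small point: for the reverse inclusion you implicitly use the analogue of Proposition~\ref{equiv-admiss} for $A^P(\cdot)$, which is legitimate since $A^P(\cdot)\in\mathscr{MR}_\theta(0,\tau)$ and the proof of that proposition only uses the maximal-regularity constant.
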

\begin{proof}
Let $C\in \mathscr{O}^\theta_{Y}\left( A(\cdot) \right)$, $x\in D$ and $s\in [0,\tau')$ with $\tau'<\tau$. According to Proposition \ref{Big-U}, we have
\begin{align*}
CV(t,s)x=C(V(t,s)x-U(t,s)x)+CU(t,s)x
\end{align*}
for $0\le s\le t\le \tau$. Thus
\begin{align*}
\int_s^{\tau'} \|CV(t,s)x\|^\theta dt&\le 2^p \|C\|^\theta_{\calL(D,Y)} \int^{\tau'}_s \|V(t,s)x-U(t,s)x\|^\theta_D dt+ (2\beta)^\theta \|x\|^\theta\cr & \le 2^\theta \|C\|^\theta_{\calL(D,Y)}\|V(\cdot,s)x -U(\cdot,s)x\|^\theta_{\mathrm{MR}_\theta(s,\tau')}+ (2\beta)^\theta \|x\|^\theta\cr & \le (2\kappa \ga \|C\|_{\calL(D,Y)})^\theta \|x\|^p+(2\beta)^\theta \|x\|^\theta:= \delta^\theta \|x\|^\theta,
\end{align*}
due to \eqref{MR-estimate-BigU}. \\ Conversely, assume that $C\in \mathscr{O}^\theta_{Y}\left( A^{P}(\cdot) \right)$, $x\in D$ and $s\in [0,\tau)$. By the same computation if we replace $C$ with $P(t)$ and using the boundedness of the map  $t\in [0,\tau]\mapsto \|P(t)\|_{\calL(D,Y)}$, we can see that
\begin{align*}
\left(\int_s^{\tau'} \|P(t)V(t,s)x\|^{\mu}dt\right)^{\frac{1}{\mu}}\le \tilde{\ga} \|x\|.
\end{align*}
Hence $P(\cdot)$ satisfies {\bf(H2)} with respect to $A(\cdot)+P(\cdot)$.  We can then use the first case to show that $C\in \mathscr{O}^\theta(A(\cdot))$.
\end{proof}

\begin{remark}\label{R1}
Let the assumptions of Theorem \ref{main1} be satisfied. Let $C(\cdot):[0,\tau]\to\calL(D,Y)$ be a strongly measurable and bounded map and let $\theta\in (1,\mu]$. Using the same arguments as in the proof of Theorem \ref{main1}, we can prove the following result: for $s\in [0,\tau')$ and $\tau'<\tau$, there exists $\ga>0$ such that
\begin{align}\label{CCC}
  \int^{\tau'}_{s}\|C(t)U(t,s)x\|^\theta dt\le \ga^\theta \|x\|^\theta \qquad (x \in D)
  \end{align}
  if and only if for $s\in [0,\tau')$ and $\tau'<\tau$, there exists $c>0$ such that
\begin{align*}
  \int^{\tau'}_{s}\|C(t)V(t,s)x\|^\theta dt\le c^\theta \|x\|^\theta \qquad (x \in D).
  \end{align*}
\end{remark}

\begin{remark}\label{R2}
Let the assumptions of Theorem \ref{main1} be satisfied. For any $a\in [0,\tau],$ we denote by $\T^a:=(\T^a(t))_{t\ge 0}$ the analytic semigroup generated by $(-A(a),D)$. Let $C(\cdot):[0,\tau]\to\calL(D,Y)$ be a strongly measurable and bounded map. Does the condition \eqref{CCC} is equivalent to $(C(a),A(a))$ is admissible for any $a$? From the proof of \cite[Theorem 3.3]{Kharou}, we deduce that $U(\cdot,a)x-\T^a(\cdot-a)x\in MR_p(0,\tau)$ for any $p\in (1,\nu]$,  $x\in {\rm Tr}$, and
\begin{align}\label{PPPP}
\left\|U(\cdot,a)x-\T^a(\cdot-a)x\right\|_{MR_p(a,\tau)}\le M \|x\|
\end{align}
for a certain constant $M>0$. Now let $b>a$ and $x\in D\subset {\rm Tr}$. Then
\begin{align*}
& \int^{b-a}_0 \left\|C(a)\T^a(t)x\right\|^pdt= \int^{b}_a \left\|C(a)\T^a(t-a)x\right\|^pdt\cr & \qquad \le 3^p \left( \int^b_a \left\|(C(a)-C(t))\T^a(t-a)x\right\|^pdt \right. \cr  &\qquad\qquad \left. +\int^b_a \left\|C(t)(\T^a(t-a)x-U(t,a)x)\right\|^pdt +\int^b_a \left\|C(t)U(t,a)x\right\|^pdt \right).
\end{align*}
Using the estimate \eqref{PPPP} and \eqref{CCC}, we have
\begin{align}\label{E1}
\begin{split}
& \int^\al_0 \left\|C(t)(\T^a(t)x-U(t,0)x)\right\|^pdt \le (\kappa M)^p \|x\|^p,\cr
&  \int^\al_0 \left\|C(t)U(t,0)x\right\|^pdt\le \ga^p \|x\|^p,
\end{split}
\end{align}
where $\kappa:=\sup_{t\in [0,T]}\|C(t)\|_{\calL(D,X)}$. On the other hand, if we assume that $C(\cdot),$ is relatively  $\mu$-Dini, then for any $p\in (1,\nu],$
\begin{align}\label{E2}
& \int^b_a \left\|(C(a)-C(t)) \T^0(t-a)x\right\|^pdt \cr  &\qquad\qquad\le \int^b_a \om(b-a)^p \|\T^a(t)x\|_D dt+\eta^p \int^b_a \|\T^a(t)x\|^p dt\cr & \qquad\qquad
\le \int^b_a \left(\frac{\om(t-a)}{t-a}\right)^p \|(t-a)\T^a(t-a)x\|_D dt+ c \|x\|^p\cr & \qquad\qquad
\le \int^b_a \left(\frac{\om(t-a)}{t-a}\right)^\nu \|(t-a)\T^a(t-a)x\|_D dt+ c \|x\|^p\cr & \qquad\qquad \le \beta^p \|x\|^p
\end{align}
for a constant $\beta>0$. Now the admissibility of $(C(a),A(a))$ follows by combining \eqref{E1} and \eqref{E2}.
\end{remark}

\begin{corollary}\label{cor-de-main}
Let $A_0,A_1:D\to X$ be two generators on $X$ such that $A_0\in \mathscr{MR}$ and
\begin{align}\label{nice}
\|(A_1-A_0)x\|\le M \|x\|_{{\rm Tr}_2}+\eta \|x\|
\end{align}
for and $x\in D$ and for some constants $M,\eta>0$. Then $\mathscr{O}^\theta_Y(A_0)=\mathscr{O}^\theta_Y(A_1)$ for any $\theta\in (1,2)$.
\end{corollary}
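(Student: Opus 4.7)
The plan is to apply Theorem~\ref{main1} to the constant choices $A(\cdot)\equiv A_0$ and $P(\cdot)\equiv A_1-A_0$ on an arbitrarily fixed interval $[0,\tau]$, so that $A^P(\cdot)\equiv A_1$. The estimate \eqref{nice} together with the embedding $D\hookrightarrow{\rm Tr}_2$ gives $A_1-A_0\in\calL(D,X)$, and because $A_0\in\mathscr{MR}$ and $A(\cdot)$ is constant, hypothesis~{\bf(H1)} holds trivially (take $\omega\equiv 0$) for every $\nu\in(1,\infty)$.

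The heart of the argument is to verify {\bf(H2)} for an arbitrary $\mu\in(1,2)$. The evolution family associated with $A(\cdot)$ is the analytic semigroup $U(t,s)=e^{-(t-s)A_0}$ generated by $-A_0$, and \eqref{nice} yields, for $r:=t-s>0$,
\begin{align*}
\|P(t)U(t,s)x\|\le M\|e^{-rA_0}x\|_{{\rm Tr}_2}+\eta\|e^{-rA_0}x\|.
\end{align*}
The key ingredient is the smoothing estimate
\begin{align*}
\|e^{-rA_0}x\|_{{\rm Tr}_2}\le K\, r^{-1/2}\|x\|\qquad (x\in X,\ 0<r\le\tau),
\end{align*}
which follows on the bounded interval $[0,\tau]$ from the $K$-functional description of ${\rm Tr}_2\simeq(X,D)_{1/2,2}$ combined with the standard analytic bounds $\|e^{-rA_0}\|_{X\to X}\le M_\tau$ and $\|e^{-rA_0}\|_{X\to D}\le C_\tau r^{-1}$: I would use $K(s,e^{-rA_0}x;X,D)\le\min(M_\tau\|x\|,\,sC_\tau r^{-1}\|x\|)$ and split $\int_0^\infty s^{-2}K(s,\cdot)^2\,ds$ at $s=r$. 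Raising to the $\mu$-th power and integrating over $(s,\tau')\subset[0,\tau)$, the decisive integral $\int_0^{\tau}r^{-\mu/2}\,dr$ converges precisely because $\mu<2$, which establishes {\bf(H2)}.

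Theorem~\ref{main1} then yields $\mathscr{O}^\theta_Y(A(\cdot))=\mathscr{O}^\theta_Y(A^P(\cdot))$ for all $\theta\in(1,\mu]$. In the constant setting, Definition~\ref{Admissibility-non-auto} reduces, after the change of variables $r=t-s$, to the admissibility estimate \eqref{CT-estim} for the semigroups $e^{-tA_0}$ and $e^{-tA_1}$, so $\mathscr{O}^\theta_Y(A(\cdot))=\mathscr{O}^\theta_Y(A_0)$ and $\mathscr{O}^\theta_Y(A^P(\cdot))=\mathscr{O}^\theta_Y(A_1)$. Letting $\mu$ approach $2$ then produces the equality for every $\theta\in(1,2)$.

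The main obstacle, and essentially the only nontrivial step, is to pin down the smoothing estimate with the precise exponent $r^{-1/2}$ into ${\rm Tr}_2$: it is exactly this power that renders $r^{-\mu/2}$ integrable on $(0,\tau)$ for all $\mu<2$ (and fails at $\mu=2$), which is what accounts for the open upper bound $\theta<2$ in the conclusion. Every remaining step is a direct unpacking of Theorem~\ref{main1} in the autonomous case.
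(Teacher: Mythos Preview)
Your proposal is correct and follows essentially the same route as the paper: set $A(t)\equiv A_0$, $P(t)\equiv A_1-A_0$, verify {\bf(H1)} trivially and {\bf(H2)} via the smoothing estimate $\|e^{-rA_0}x\|_{{\rm Tr}_2}\lesssim r^{-1/2}\|x\|$, then invoke Theorem~\ref{main1} and identify the constant-coefficient non-autonomous admissibility with the semigroup one. The only difference is cosmetic: the paper writes the integral bound $\int_0^{\tau'} t^{-\theta/2}\,dt<\infty$ directly, citing ``trace space properties,'' whereas you spell out the $K$-functional computation behind that smoothing estimate.
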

\begin{proof}
We set $A(t)=A_0$ and $P(t)=A_1-A_0$ for any $t\in [0,\tau]$. Clearly $A(\cdot)$ satisfies the condition {\bf (H1)}. Let us  now verify that $P(\cdot)$ satisfies the condition {\bf (H2)} for $\mu=\theta\in (1,2)$. Let $x\in D$ and $s,\tau'\in [0,\tau)$ with $s<\tau'$. By using \eqref{nice}, a change of variables and trace space properties, we obtain
\begin{align*}
\int^{\tau'}_s \|P(t)e^{-(t-s)A_0} x\|^\theta dt &\le 2^\theta\left( M^\theta \int^{\tau'}_0 \frac{dt}{t^{\frac{\theta}{2}}} dt +\eta^\theta \right) \|x\|^\theta\cr & \le \tilde{\ga}^\theta \|x\|^\theta
\end{align*}
for a constant $\tilde{\ga}:=\tilde{\ga}(\theta,\tau',M,\eta)>0$. Now Theorem \ref{main1}, shows that $\mathscr{O}^\theta_Y(A_0)=\mathscr{O}^\theta_Y(A(\cdot))=\mathscr{O}^\theta_Y(A(\cdot)+P(\cdot))=\mathscr{O}^\theta_Y(A_1).$
\end{proof}

\section{Examples} \label{sec:4}
In this section, we give two situations that illustrate our abstract results. We choose the heat equation as model, but the results can also be applied to other physical models.
\subsection{A heat equation with Dirichlet conditions and  point observation}Let $p,q\in (1,\infty)$ and $n\in\mathbb{N}^\ast$ such that $\frac{1}{p}+\frac{1}{q}=1$ and $q>\frac{np}{2}$. Let $\Om$ be a bounded open subset of $\R^n$ with $C^2$-boundary $\partial\Om$, and let $a_{kl},a_k,a_0:\R^+\times \overline{\Om}\to \R,$ $k,l=1,\cdots,n$ be bounded continuous functions such that
\begin{align*}
\sum_{ij}a_{ij}(t,x)x_i x_j\ge \beta \|x\|^2
\end{align*}
for a constant $\beta>0$. In addition, we suppose that
\begin{align*}
\max_{1\le i,j\le n}\sup_{x\in \Om}|a_{ij}(t,x)-a_{ij}(s,x)|\le \om(t-s)
\end{align*}
for any $s,t\in [0,\tau],$ where $\om:[0,\tau]\to\R^+$ is a continuous function satisfying
\begin{align*}
\int^{\tau}_0 \left(\frac{w(t)}{t}\right)^\nu dt < \infty
\end{align*}
for some $\nu\in (1,\infty)$. Select
\begin{align*}X=L^q(\Om)\quad\text{and}\quad D=W^{2,q}(\Om)\cap W^{1,q}_0(\Om).\end{align*}
Define a family of unbounded operators on $X$, $A(t):D\to X$ with
\begin{align*}
A(t)u:=-\sum_{ij}a_{ij}(t,\cdot)\partial_i \partial_j u-b_0(t,\cdot)u,\quad t\in [0,\tau],\; u\in D,
\end{align*}
where $b_0\in L^\infty((0,\tau)\times \Om)$. According to \cite{ACFP07}, and \cite{DHP03} the operator $A(t)$ has the maximal $L^p$ regularity for any $t$. Remark that for any $t,s\in [0,\tau]$ and $u\in D,$
\begin{align*}
\|A(t)-A(s)\|_{L^q(\Om)}&\le \om(t-s)\sum_{ij}\|\partial_i \partial_ju\|_{L^q(\Om)}+2 \|b_0\|_\infty \|u\|_{L^q(\Om)}\cr & \le
\om(t-s)\|u\|_D+2 \|b_0\|_\infty \|u\|_{L^q(\Om)}.
\end{align*}
Thus $A(\cdot)$ satisfies the condition {\bf(H1)}. It is well-known that the evolution family $(U(t,s))_{0\le s\le t\le \tau}$ on $X$ associated with $A(t)$ is given by
\begin{align*}
U(t,s)\varphi=\int_{\Om} k(t,s,\cdot,x)\varphi(x)dx,\qquad 0\le s\le t \le  \tau,\quad \varphi\in X,
\end{align*}
for a continuous kernel $k(t,s,\cdot,x)$, $\tau\ge t>s\ge 0$ and $x\in \Om$ such that
\begin{align}\label{kernal}
|k(t,s,\xi,x)|\le M(t-s)^{-\frac{n}{2}} \exp\left( -\frac{\delta |\xi-x|^2}{t-s}+\tilde{\delta}(t-s) \right)
\end{align}
for $\xi\in \overline{\Om}$,  constants $M,\delta>0$ and $\tilde{\delta}\in\R$, see \cite[Section IV.16]{LSU68}.

Also, for $\al\in (0,\frac{1}{p})$, we define
\begin{align*}
P(t):=(-A(t))^{\al}_{|D},\qquad t\in [0,\tau].
\end{align*}
From \cite{AT87}, we deduce that there exists a constant $\tilde{M}>0$ such that for $0\le s<\tau'< \tau,$ and $\varphi\in D$,
\begin{align*}
\int_s^{\tau'} |(-A(t))^{\al}U(t,s)\varphi|^pdt &\le \tilde{M}\int_s^{\tau'} (t-s)^{-\al p}dt \|\varphi\|^p_X\cr & \le  \tilde{M} \frac{{\tau'}^{1-\al p}}{1-\al p} \|\varphi\|^p_X.
\end{align*}
This implies that $P(\cdot)$ satisfies the condition ${\bf (H2)}$ for $\mu=p$. We also define an observation operator
\begin{align*}
C:D\to \C,\quad C \varphi:=\varphi(c),\quad \varphi\in D.
\end{align*}
 As in \cite[p.23]{S02}, by using H\"older's inequality and the estimate \eqref{kernal},  for $\varphi\in D,$ $\tau' >s$, there exists a constant $c>0$ such that
\begin{align*}
\int_s^{\tau'} |CU(t,s)\varphi|^pdt & \le c \left(\int_s^{\tau'} (t-s)^{-\frac{np}{2q}}dt\right)\|\varphi\|^p_{L^q(\Om)} \cr & \le \ga^p \|\varphi\|^p_{L^q(\Om)}
\end{align*}
for $q>\frac{np}{2}$ and a certain constant $\ga>0$. This shows that $(C,A(\cdot))$ is admissible with exponent $p$.  Finally, according to Theorem \ref{main1}, $(C,A(\cdot)+P(\cdot))$ is admissible.
\subsection{A heat equation with mixed Dirichlet and Neumann conditions and a non local perturbation}
Let  $\Om$ be a bounded domain in $\R^n$ with $n\ge 2$ with smooth boundary $\partial\Om=\Gamma_0\cup \Gamma_1$ and $\Gamma_0\cap \Gamma_1=\emptyset$. We select
\begin{align*}
& X:=L^2(\Om),\quad Y:=L^2(\partial \Om), \cr & D:=\left\{ u\in H^2(\Om): u=0\;\text{on}\;\Gamma_0\;\text{and}\; \frac{\partial u}{\partial\nu}=0\;\text{on}\;\Gamma_1\right\},
\end{align*}
where $\nu$ is the unit normal vector. Define $\A:=\Delta$ with domain $D(\A)=D$. It is well known that $\A$ is a generator of a strongly continuous semigroup on $X$, see e.g. \cite{LTZ-04}. Now consider operators $A(t):D\to X,\,t\in [0,\tau]$ such that
\begin{align*}
A(t)u=\A u+\int_{\Gamma_1}\psi(t,\cdot,z)u(\cdot,z)dz,\qquad u\in D,
\end{align*}
where $\psi:[0,\tau]\times \Om\times \partial \Om\to \R$ is a measurable function such that $\psi(t,\cdot,\cdot)\in L^2(\Om\times \partial\Om))$ for a.e. $t\in [0,\tau]$ and \begin{align}\label{omega-estim-exa}
\|\psi(t,\cdot,\cdot)-\psi(s,\cdot,\cdot)\|_{L^2(\Om\times \partial\Om))}\le \om(t-s)
\end{align}
for a.e. $t,s\in [0,\tau]$, where $\om:[0,\tau]\to [0,\infty)$ is a continuous function with $w(0)=0$ and satisfies the condition \eqref{omega-condition} for $p=2$.

Let $b\in L^{\infty}([0,\tau])$ a scalar function and $\al\in (0,\frac{1}{2})$. Select
 \begin{align*}
P(t):=b(t)\A^{\al}:D\to X,\qquad t\in [0,\tau].
\end{align*}
Moreover, we consider the observation operator
\begin{align*}
C: D\to Y,\quad Cu=\begin{cases} 0,& \text{on}\,\Gamma_0,\cr u,& \text{on}\,\Gamma_1.\end{cases}
\end{align*}
According to \cite{LTZ-04}, $C\in\mathscr{O}^2_Y(\A)$. On the other hand, for any $t\in [0,\tau],$ we can write $A(t)=\A+B(t)C$ with $B(t):Y\to X$ defined by
\begin{align*}
B(t)g=\int_{\partial\Om}\psi(t,\cdot,z)g(\cdot,z)dz,\qquad g\in Y.
\end{align*}
Using \eqref{omega-estim-exa}, we obtain \begin{align*} \|B(t)g\|\le \left(\|\om\|_\infty +\|\psi(0,\cdot,\cdot)\|_{L^2(\Om\times \partial\Om))}\right)\|g\|_Y:= c \|g\|_Y\end{align*} for any $g\in Y$ and $t\in [0,\tau]$. Using the admissibility of $C$ for $\A$, for any $s,\tau'\in [0,\tau)$ such that $s<\tau'$, and $f\in D,$ we have
\begin{align*}
\int^{\tau'}_s \|B(t)Ce^{-\A (t-s)}f\|^2dt\le c^2 \int_0^{\tau'-s}\|Ce^{-\A t}f\|^2dt\le (c\ga)^2 \|f\|^2,
\end{align*}
for a constant $\ga>0$. Now by using \cite{H06}, we associate to $A(\cdot)$ an evolution family $U=(U(t,s))_{0\le s\le t\le \tau}$ on $X$. On the other hand, as $A\in \mathscr{MR},$ then by \cite{ABDH20}, $A(t)\in\mathscr{MR}$ for any $t\in [0,\tau]$. In addition, for each $t$, $B(t)C$ is admissible for the generator $\A,$ then by \cite{HI06}, $(C,A(t))$ is admissible for each $t\in [0,\tau].$ Thus by Theorem \ref{Kharou-thm2}, $(C,A(\cdot))$ is admissible. \\ Let us now discuss the admissibility of $C$ for $A(\cdot)+P(\cdot)$. For any $f\in D$ and $s,\tau'\in [0,\tau)$ with $s<\tau',$ and using H\"older's inequality, we have
\begin{align*}
\int^{\tau'}_s \|P(t)e^{-(t-s)\A}f\|^2&\le \|b(\cdot)\|_\infty\int^{\tau'-s}_0 \|(-\A)^{\al}e^{- t \A}f\|^2dt\cr & \le  \|b(\cdot)\|_\infty \int^{\tau'}_0 \frac{dt}{t^\al} \|f\|^2=  \|b(\cdot)\|_\infty \frac{(\tau')^{1-\al}}{1-\al} \|f\|^2\cr & \le 2\|b(\cdot)\|_\infty \tau' \|f\|^2
\end{align*}
for $\al\in (0,\frac{1}{2})$. Now by using Remark \ref{R1}, $P(\cdot)$ satisfies the condition {\bf(H2)} for $\mu=2$. On the other hand, for $t,s\in [0,\tau]$ and $f\in D,$ we have
\begin{align*}
\|A(t)f-A(s)f\|^2& \le\int_{\Om}\left(\int_{\partial \Om} |\psi(t,x,z)-\psi(s,x,z)| |(Cf)(z)|dz\right)^2dx\cr & \le
\int_{\Om} \int_{\partial \Om} |\psi(t,x,z)-\psi(s,x,z)|^2 dzdx \|Cf\|^2_Y\cr & \le \om(t-s)^2  \|Cf\|^2_Y,
\end{align*}
due to the estimate \eqref{omega-estim-exa}. This implies that $A(\cdot):[0,\tau]\to \calL(D,X)$ satisfies the relative $2$-Dini condition, so that the condition {\bf(H1)} holds. Thus $(C,A(\cdot)+P(\cdot))$ is admissible, due to Theorem \ref{main1}.

\section*{Data availability statement}
Data sharing not applicable to this article as no datasets were generated or analysed during the current study.

\end{document}